\newtheorem{thm}{Theorem}[section]
\newtheorem{THM}{Theorem}
\newtheorem{COR}[THM]{Corollary}
\newtheorem{prop}[thm]{Proposition}
\newtheorem{lemma}[thm]{Lemma}
\theoremstyle{definition}
\newtheorem{remark}[thm]{Remark}
\newtheorem{example}[thm]{Example}
\DeclareMathOperator{\Res}{Res}
\DeclareMathOperator{\Hom}{Hom}
\DeclareMathOperator{\Aut}{Aut}
\DeclareMathOperator{\sing}{sing}
\DeclareMathOperator{\Aff}{Aff(\mathbb C)}
\DeclareMathOperator{\PSL}{PSL(2,\mathbb C)}
\DeclareMathOperator{\Affn}{Aff(V)}
\DeclareMathOperator{\Diff}{Diff}
\DeclareMathOperator{\GL}{GL}
\def\C{\mathbb C}
\def\Z{\mathbb Z}
\def\F{\mathcal F}
\def\D{\mathcal D}
\def\G{\mathcal G}
\def\H{\mathcal H}
\def\calN{N}
\def\fnabla{\widehat{\nabla}}
\def \Pu{\mathbb P^1}
\begin{document}

\title[Transversely affine foliations]
{Transversely affine foliations on projective manifolds}
\author[G. Cousin and J.V. Pereira]
{Ga\"el  COUSIN and Jorge Vit\'{o}rio PEREIRA}
\address{\newline
IMPA, Estrada Dona Castorina, 110, Horto, Rio de Janeiro,
Brasil}\email{jvp@impa.br, gael@impa.br}

\subjclass{} \keywords{Foliations, Transverse Affine Structures}
\thanks{Jorge Vit\'orio Pereira is partially supported by CNPq-Brazil. During preparation of this work, Ga\"el Cousin was a fellow of CNPq-Brazil}
\begin{abstract}
We describe the structure of singular transversely affine foliations of codimension one on projective manifolds with zero first Betti number.
Our result can be rephrased as a theorem on rank two reducible flat meromorphic connections.
\end{abstract}

\maketitle

\setcounter{tocdepth}{1}
 \sloppy

\section{Introduction}

In this paper we study holomorphic foliations of codimension one on projective manifolds which are  singular transversely affine  in  the sense of  \cite{MR1432053}. These are natural generalizations of (smooth) transversely affine  foliations of codimension one as defined in \cite{MR1120547}. The classical definition is weakened at two points: the transverse structure is defined only on the complement of a divisor (but  extends meromorphically through this divisor); and its developing map  is not necessarily a submersion. A precise definition is given in Section \ref{S:definition}.

A Theorem due to Singer \cite{MR1062869} says that the class of singular transversely affine foliations of codimension one, roughly speaking,  coincides with the class of codimension one foliations which admit first integrals that can be obtained by iteration of the following three operations: resolution of algebraic equations, exponentiation, and integration of closed $1$-forms. More precisely, there exists a Liouvillian extension (cf. loc. cit. for a   definition) of the field of rational functions of the ambient manifold containing a non constant first integral for the foliation.

Our main result describes the structure of singular  transversely affine foliations of codimension one on a projective manifold $X$
rather precisely, at least under the assumption $h^1(X,\mathbb C) =0$.

\begin{THM}\label{THM:A}
Let $X$ be a projective manifold with $h^1(X,\mathbb C)=0$ and let $\mathcal F$ be a singular transversely affine foliation of codimension one on $X$.   Then at least one of following assertions holds true.
\begin{enumerate}
\item There exists a generically finite Galois morphism $p:Y\to X$ such that
$p^*\mathcal F$ is defined by a closed rational $1$-form.
\item There exists a transversely affine Ricatti foliation $\mathcal R$ on a surface $S$ and
a rational map $p:X \dashrightarrow S$ such that $p^* \mathcal R = \mathcal F$.
\end{enumerate}
\end{THM}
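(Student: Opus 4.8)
The plan is to encode the transverse affine structure by rational $1$-forms and then run a trichotomy on the size of its monodromy. Fix a rational $1$-form $\omega$ defining $\F$ and, as furnished by the definition of a singular transversely affine foliation, a rational $1$-form $\theta$ with $d\omega=\theta\wedge\omega$ and $d\theta=0$; equivalently $\left(\begin{smallmatrix}\theta & 0\\ \omega & 0\end{smallmatrix}\right)$ is the matrix of a flat meromorphic connection $\nabla$ on a rank two bundle, reducible because the second row vanishes, with $\F$ the trace of the Riccati foliation of $\nabla$ on a suitable rational section. Let $\Delta$ be the polar divisor of the structure; on $X\setminus\Delta$ there is a developing map $\phi$ and a monodromy representation $\rho\colon\pi_1(X\setminus\Delta)\to\Aff$, whose linear part is a character $\chi\colon\pi_1(X\setminus\Delta)\to\C^{*}$. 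Since $\theta=-d\log(d\phi/\omega)$, the character $\chi$ is the monodromy of $\exp(-\int\theta)$, hence, as $h^1(X,\C)=0$, it is determined by the residues $\lambda_i$ of $\theta$ along the components $D_i$ of $\Delta$; in particular $\chi$ has finite image precisely when every $\lambda_i\in\Q$. Finally, the meromorphic extension of the structure through $\Delta$ forces its singularities to be regular, so $\theta$ has only logarithmic poles and $\phi$ only logarithmic growth; consequently any single-valued closed meromorphic $1$-form manufactured from the structure, on $X$ or on a finite cover of it, is automatically rational.

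\emph{The two elementary cases.} Suppose first that $\rho$ has abelian image: either that image consists of translations, and then $d\phi$ is a single-valued closed meromorphic, hence rational, $1$-form defining $\F$; or it fixes a common point of $\C$ and, after conjugating the structure, $\rho$ is multiplicative and $d\phi/\phi$ is such a form. In both cases assertion (1) holds with $p=\mathrm{id}$. Suppose next that $\rho$ is non-abelian but $\chi$ has finite image, equivalently every $\lambda_i\in\Q$; let $p\colon Y\to X$ be the finite abelian — hence Galois — cover with $\mathrm{Image}(p_{\ast})=\ker\chi$, so that $\chi\circ p_{\ast}$ is trivial. Then $p^{\ast}\rho$ has image in the translations, one has $p^{\ast}\theta=-dv/v$ for some $v\in\C(Y)^{*}$ (the residues of $p^{\ast}\theta$ are now integers and $\exp(-\int p^{\ast}\theta)$ is single-valued, hence rational), so $d(v\,p^{\ast}\omega)=0$ and $v\,p^{\ast}\omega$ is a closed rational $1$-form on $Y$ defining $p^{\ast}\F$; again (1) holds.

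\emph{The substantial case: $\rho$ non-abelian with $\chi$ of infinite image.} Here $\F$ will be a pullback of a Riccati foliation. Since $\chi$ is nontrivial, non-abelianness of $\rho$ means exactly that the extension class $e\in H^1\!\big(\pi_1(X\setminus\Delta),\C_\chi\big)$ classifying $\rho$ as an extension of $\chi$ by the translation subgroup is nonzero, so $\chi$ lies in the first cohomology support locus $\Sigma^1(X\setminus\Delta)$; and $\chi$ is non-torsion because its image is infinite. By the structure of cohomology support loci on quasi-projective manifolds (Arapura's theorem, in its orbifold and logarithmic form), $\Sigma^1(X\setminus\Delta)$ is a finite union of torsion translates of subtori whose positive-dimensional components are pulled back from orbicurves; being non-torsion, $\chi$ lies on such a component. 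Hence there exist a fibration $f\colon X\dashrightarrow C$ onto an orbicurve, a character $\chi'$ of $\pi_1^{\mathrm{orb}}(C)$ with $\chi=f^{\ast}\chi'$, and an identification under which $H^1\!\big(\pi_1(X\setminus\Delta),\C_\chi\big)$ is the pullback of $H^1\!\big(\pi_1^{\mathrm{orb}}(C),\C_{\chi'}\big)$ and $e$ the pullback of some class $e'$. Thus the connection $\nabla$ itself, not merely its monodromy, is $f^{\ast}$ of a rank two reducible flat meromorphic connection $\nabla'$ on $C$. The Riccati foliation of $\nabla'$ lives on a $\P^1$-bundle $S\to C$, is transversely affine because the monodromy of $\nabla'$ lies in $\Aff$, and following the distinguished section used upstairs together with the bundle map (over $f$) from the $\P^1$-bundle of $\nabla$ to $S$ yields a rational map $p\colon X\dashrightarrow S$ with $p^{\ast}\mathcal R=\F$. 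This is assertion (2).

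The main obstacle is this last case: extracting from the bare nonvanishing of $H^1(\C_\chi)$ a factorization through a curve that carries along the whole extension datum $e$ — so that it is the foliation, and not merely its monodromy, that descends — and then performing the orbifold and ramification bookkeeping (the choice of orbifold structure on $C$, the behaviour of $f$ and of the distinguished section along $\Delta$ and along the fibres of $f$) needed to realize the descended object as a genuine transversely affine Riccati foliation on a surface. The hypothesis $h^1(X,\C)=0$ is exactly what keeps the other cases elementary: it localizes the abelian part of the monodromy on the boundary $\Delta$, so that rationality of the residues $\lambda_i$ already lets one trivialize $\chi$ by a finite cyclic cover, and it forces $\Sigma^1(X)$ itself to be finite, so that every orbicurve appearing in the substantial case necessarily originates from the divisor $\Delta$.
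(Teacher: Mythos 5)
The fatal step is your preliminary claim that ``the meromorphic extension of the structure through $\Delta$ forces its singularities to be regular, so $\theta$ has only logarithmic poles.'' This is false: by definition the connection $\nabla$ is allowed poles of arbitrary order along $D$, and nothing in the setup forces regularity. Everything downstream that rests on ``single-valued closed meromorphic, hence rational'' then breaks: when the linear monodromy is trivial the function $\exp(\int\theta)$ is indeed single-valued on $X\setminus\Delta$, but it need not extend meromorphically across a higher-order pole of $\theta$ (think of $\exp(\int dx/x^{3})$). Example~\ref{llibre} of the paper, $\omega_0=x^3dy+\tfrac12(x+y)dx$, $\eta_0=-\frac{dx}{x}+\frac{dx}{x^3}$ on $\P^1\times\P^1$, has trivial monodromy yet is not defined by any closed rational $1$-form; so both of your ``elementary cases'' reach a false conclusion, since with additive, trivial, or finite-linear-part monodromy the correct alternative can be (2) rather than (1). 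The bulk of the paper's Theorems~\ref{T:add} and~\ref{T:mult} is devoted precisely to this irregular situation: one splits $\nabla$ into a logarithmic part plus an exact form $dg$ (this is where $h^1(X,\C)=0$ enters), and then uses the Picard--Fuchs equation together with Deligne's moderate-growth theorem to kill the periods of $\omega_0$ on the fibers of $g$, or, in the multiplicative case, saddle-node normal forms, the Hodge index theorem and Andreotti/Hartshorne extension to produce the invariant sections globally.

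In the Zariski-dense case your outline is closer to the paper's, but two steps are asserted rather than proved. First, factorizing the full representation $\varrho$ (equivalently the extension class $e$) through an orbicurve, and not merely its linear part $\chi$, does not follow formally from the statement that positive-dimensional components of $\Sigma^1_k$ are pulled back from orbicurves; the paper's Theorem~\ref{T:factoriza} proves it by deforming $\chi$ inside its component, an arithmetic argument showing $\varrho(\pi_1(F))$ would force the linear monodromy into a ring of algebraic integers, and a closedness argument. Second, and more seriously, passing from ``the monodromy factors through $f$'' to ``the foliation is a pullback of a Riccati foliation'' is Proposition~\ref{P:pullRiccati}, which requires regular singularities along the general fiber of $f$; the paper must separately exclude non-logarithmic components of $D$ dominating the base curve, using the holonomy analysis of Proposition~\ref{P:holonomy}. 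You flag this ``bookkeeping'' as the main obstacle but do not carry it out, and since it was your (incorrect) regularity claim that would have dispensed with it, the gap is genuine.
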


Our proof does not use the hypothesis on the topology of $X$ when the  transverse affine structure is regular (the connection on $N\mathcal F$ has at most logarithmic singularities); or when  the monodromy of the transverse affine structure is Zariski dense.

We do not know if the hypothesis on the topology is necessary in general. The result as stated above probably holds also on compact K\"ahler manifolds, but at some key points we used results that are only available in the algebraic category. We do know that the result does not hold for compact complex manifolds in general; foliations on Inoue surfaces are perhaps the easiest counterexamples, see \cite[Remark 2.1]{MR2233706}. Also, the result does not hold for transversely affine germs of codimension one foliations, see \cite[sec. IV]{MR2008442}.

There were previous attempts to arrive at a structure theorem for singular transversely affine foliations of codimension one on projective spaces, see   \cite{MR1432053,MR1390971,MR1836428} to have a sample of such attempts. All these works approach the problem through the study of the foliation on a neighborhood of the singular divisor of the transverse affine structure based on an analysis of the (generalized) holonomy of this divisor, see also \cite{MR1711295}. They  use extension results to globalize the semi-local conclusions. The nature of this method leads one to impose restrictions on the type of singularities of the foliation. In  contrast, our approach is based on the study of the monodromy representation of the singular transversely affine foliation, and relies on recent results \cite{Bartolo:arXiv1005.4761, Budur:arXiv1211.3766} on the structure of representations of the fundamental groups of quasi-projective manifolds in the affine group $\Aff$. We also make use of some classical results  on the periods of families of closed rational $1$-forms  \cite{MR0417174} combined with basic properties of Picard-Fuchs equations; as well as results on the local/semi-local structure of singular transversely affine foliations.

%\medskip

As a rather concrete application,  we provide a classification of Liouvillian integrable $1$-forms on $\mathbb C^n$ which do not admit invariant algebraic hypersurfaces.

\begin{COR}\label{COR:B}
Let $\omega$ be a polynomial differential $1$-form on $\mathbb C^n$. If $\omega$ is Liouvillian integrable and
has no invariant algebraic hypersurface then there exists a polynomial map $P : \mathbb C^n \to \mathbb C^2$ and
polynomials $a,b \in \mathbb C[ x ]$ such that
\[
\omega = P^*( dy + ( a(x) + b(x) y ) dx ) \, .
\]
\end{COR}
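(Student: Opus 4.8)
The plan is to compactify and apply Theorem~\ref{THM:A}. Let $\F_0$ be the codimension one foliation of $\C^n$ defined by $\omega$ and let $\F$ be its extension to $X=\P^n$; we may assume the coefficients of $\omega$ have no common nonconstant factor, since removing it changes neither $\F_0$ nor the shape of the conclusion. By Singer's theorem, as recalled in the introduction, Liouvillian integrability of $\omega$ means $\F_0$ is singular transversely affine: there are rational $1$-forms $\omega_0$ (defining $\F_0$) and $\omega_1$ on $\C^n$ with $d\omega_0=\omega_0\wedge\omega_1$ and $d\omega_1=0$. These are also rational $1$-forms on $\P^n$ and the identities persist, so $(\omega_0,\omega_1)$ makes $\F$ a singular transversely affine foliation on $\P^n$; as $h^1(\P^n,\C)=0$, Theorem~\ref{THM:A} applies. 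We use the hypothesis in the form: \emph{every $\F$-invariant hypersurface of $\P^n$ is contained in $H_\infty:=\P^n\setminus\C^n$}, since an invariant hypersurface meeting $\C^n$ would cut out an $\F_0$-invariant algebraic hypersurface of $\omega$.

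Assume first that case (1) holds: a generically finite Galois cover $p\colon Y\to X$ with group $G$ such that $p^*\F$ is defined by a closed rational $1$-form $\eta$. Each $g^*\eta$ defines $p^*\F$ and is closed, so $g^*\eta=c_g\eta$ with $c_g$ rational, and $dc_g\wedge\eta=0$ forces $c_g$ to be a first integral of $p^*\F$ or a constant. If $p^*\F$ has a nonconstant rational first integral, its generic fibres are algebraic leaves whose $p$-images sweep out $X$, hence meet $\C^n$: a contradiction. Otherwise $g\mapsto c_g$ is a character $\chi\colon G\to\C^*$ with finite cyclic image. If $\chi$ is trivial, $\eta$ descends to a closed rational $1$-form $\bar\eta$ on $\P^n$ defining $\F$; writing $\bar\eta|_{\C^n}=\sum\lambda_i\,d\log f_i+dh$ with $f_i$ polynomials in lowest terms and $h$ rational, the components $\{f_i=0\}$ with $\lambda_i\neq0$ and the polar divisor of $h$ are $\F_0$-invariant, so all $\lambda_i$ vanish and $h$ is a polynomial. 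Then $\omega$ and $dh$ are polynomial $1$-forms defining the same foliation with no nonconstant common factor (else the induced transversely affine structure would yield an $\F_0$-invariant hypersurface), so $\omega=c\,dh=d(ch)$ for a constant $c$, i.e.\ $\omega=P^*(dy)$ with $P=(0,ch)$. If the image of $\chi$ is $\mu_m$, $m\ge2$, descending $\eta$ along $\ker\chi$ equips $\F$ on $\P^n$ with a transversely affine structure whose connection form is a logarithmic $1$-form $\tfrac1m\,d\log f$, $f$ a nonconstant rational function (nonconstant since the associated cyclic cover of $\P^n$ is nontrivial); then the components of $\operatorname{div}(f)$ are $\F$-invariant, and since $\operatorname{div}(f)$ has degree zero it cannot be supported on $H_\infty$, so some component meets $\C^n$: again a contradiction. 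Hence case (1) yields the stated form with $a=b=0$.

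Assume now case (2): a transversely affine Riccati foliation $\mathcal R$ on a surface $S$ and a dominant rational map $p\colon X\dashrightarrow S$ with $p^*\mathcal R=\F$. The surface $S$ is ruled over a curve $C$, and $X\dashrightarrow S\to C$ is dominant; since $\P^n$ carries no nonzero holomorphic $1$-form it admits no dominant rational map onto a curve of positive genus, so $C\cong\P^1$. Being transversely affine, $\mathcal R$ has monodromy fixing a point of the generic fibre $\P^1$; the corresponding $\mathcal R$-invariant section, in coordinates $x$ on $C$ and $y$ on the fibre adapted to it, identifies $\mathcal R$ off this section and off the fibres at infinity with a Riccati equation $dy-(\alpha(x)+\beta(x)y)\,dx=0$, $\alpha,\beta\in\C(x)$. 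Each $\mathcal R$-invariant fibre of $S\to C$ (in particular those over poles of $\alpha,\beta$) and the invariant section pull back to $\F$-invariant hypersurfaces of $\P^n$, which therefore do not meet $\C^n$. Choosing $x$ so that the point at infinity of $C$ is $\mathcal R$-invariant when $\mathcal R$ has an invariant fibre (otherwise $\alpha,\beta$ are already constant), the pole divisors of $x\circ p$ and $y\circ p$ are supported on $H_\infty$, hence $P_1:=x\circ p$ and $P_2:=y\circ p$ are polynomials; and were $\alpha$ or $\beta$ to have a finite pole $x_0$, the nonempty hypersurface $\{P_1=x_0\}\subset\C^n$ would be $\F_0$-invariant, so $\alpha,\beta\in\C[x]$. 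Finally $\omega$ and the primitive polynomial $1$-form $dP_2-(\alpha(P_1)+\beta(P_1)P_2)\,dP_1$ define the same foliation, hence differ by a nonzero constant $c$; replacing $P_2$ by $cP_2$ and absorbing $c$ into $\alpha$ gives $\omega=P^*\bigl(dy+(a(x)+b(x)y)\,dx\bigr)$ with $a,b\in\C[x]$ and $P=(P_1,cP_2)$.

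The crux is this passage from the birational, projective output of Theorem~\ref{THM:A} to the polynomial normal form on $\C^n$: in case (1) one must exclude a nontrivial linear part of the transverse monodromy, and in case (2) one must simultaneously tame the indeterminacy locus of $p$ and the poles of the Riccati data. The mechanism is uniform --- any ``vertical'' invariant divisor ($\operatorname{div}(f)$, a pole of $\alpha$ or $\beta$, a special fibre of the ruling, the invariant section) pulls back to an invariant hypersurface that the hypothesis forbids unless it lies at infinity --- but setting this up carefully, together with the bookkeeping that turns proportionality of $1$-forms into equality up to a constant and the reduction to a primitive $\omega$, is where the effort goes.
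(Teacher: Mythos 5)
Your proof is correct and follows the same overall strategy as the paper: extend the foliation to $\mathbb P^n$, invoke Theorem \ref{THM:A}, and use the absence of invariant algebraic hypersurfaces in $\mathbb C^n$ to force every polar or singular object (invariant curves of $\mathcal R$, poles of the Riccati coefficients, the indeterminacy/pole loci of the map to the surface) to sit over the hyperplane at infinity, whence everything is polynomial. The one genuine difference is that the paper's proof passes directly to alternative (2) of Theorem \ref{THM:A}, leaving alternative (1) implicit, whereas you treat it explicitly via the Galois character $g\mapsto c_g$: you rule out a nonconstant rational first integral and a nontrivial torsion character (using that $\operatorname{div}(f)$ has degree zero on $\mathbb P^n$ while $H_\infty$ is the only available invariant hypersurface), leaving only $\omega=d(ch)$. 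This is a worthwhile addition, since Theorem \ref{THM:A} only guarantees that \emph{at least one} alternative holds. In case (2) the paper instead chooses a birational model of $S$ compactifying a $\mathbb C^2$ on which $\mathcal R$ has no invariant algebraic curves and whose boundary is invariant and non-dicritical, which sidesteps your appeal to an $\mathcal R$-invariant section (a priori the invariant hypersurface guaranteed by transverse affineness of a Riccati foliation could be a multisection); your argument would benefit from either that normalization or a word on why a genuine section exists, but this is a presentational point rather than a gap, and the paper also records the auxiliary fact $d\omega=dQ\wedge\omega$ with $Q$ polynomial, which your route does not need.
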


The existence of such Liouvillian integrable $1$-forms have been recently recognized by \cite{MR2853194} as a new phenomenon,
but as stated above they are  nothing but  disguised classical Riccati equations.

%\medskip

Our Theorem \ref{THM:A} can be rephrased as a structure theorem for reducible  flat meromorphic $\mathfrak{sl}(2)$-connections  over projective manifolds.
By a  $\mathfrak{sl}(2)$-connection we  mean a connection  with  zero trace   on a rank two vector bundle with trivial determinant.

\begin{THM}\label{THM:C}
Let $X$ be a projective manifold with $h^1(X,\C)=0$.
Let $\nabla$ be a reducible flat meromorphic $\mathfrak{sl}(2)$-connection on a vector bundle $V$ over $X$.
 There exists a generically finite Galois morphism $p:Y\to X$ such that at least one of the following assertions holds true.
\begin{enumerate}
\item \label{COR:C1} The connection matrix of $p^*\mathcal \nabla$ in a suitable basis of rational sections of $p^*V$ is
\[
\left[ \begin{matrix} 0&\omega\\0&0\end{matrix}\right] \quad \text{or} \quad
\left[\begin{matrix} \eta/2&0\\0&-\eta/2\end{matrix}\right] \, .
\]
In particular the monodromy of $\nabla$ is virtually abelian.
\item \label{COR:C2} There exists  a curve $C$, a meromorphic flat connection $\nabla_0$ on a rank two bundle over $C$ and a rational map $\pi : Y \dasharrow C$ such that $p^*\nabla$ is birationally gauge equivalent to $\pi^*\nabla_0$.
Moreover, in this case the degree of $p$ is at most two.
\end{enumerate}
\end{THM}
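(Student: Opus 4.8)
Theorem~\ref{THM:C} is a reformulation of Theorem~\ref{THM:A}, so the plan is to make precise the classical dictionary between reducible flat rank-two connections and singular transversely affine codimension-one foliations, and then transport Theorem~\ref{THM:A} through it. First I would set up the dictionary. Reducibility means that $V$ carries a $\nabla$-invariant line sub-bundle; choosing a basis of rational sections of $V$ whose first vector spans it and using $\det V\cong\calO_X$, the connection matrix of $\nabla$ takes the form $\left[\begin{smallmatrix}\alpha&\beta\\0&-\alpha\end{smallmatrix}\right]$ with $\alpha,\beta$ rational $1$-forms, and flatness is equivalent to $d\alpha=0$ together with $d\beta+2\alpha\wedge\beta=0$. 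If a birational upper-triangular gauge transformation makes $\beta$ vanish, then $\nabla$ is already in the diagonal normal form of the first conclusion of Theorem~\ref{THM:C} (with $p$ the identity). Otherwise $\beta$ defines a singular codimension-one foliation $\F$, and the two identities exhibit $\F$ as singular transversely affine with "linear part" $\eta:=-2\alpha$; conversely such a foliation produces such a connection, unique up to birational gauge equivalence and — because of the discrepancy between $\mathrm{Aff}(\C)\subset\mathrm{PGL}_2$ and its preimage in $\mathrm{GL}_2$ — up to tensoring by a rank-one connection with at most a sign indeterminacy. Upper-triangular gauge transformations act on $(\alpha,\beta)$ by the unipotent rule $\beta\mapsto\beta-2t\,\alpha-dt$ and the diagonal rule $(\alpha,\beta)\mapsto(\alpha-\tfrac{dh}{h},\,h^{2}\beta)$, which are exactly the modifications allowed on a pair presenting a transverse affine structure.

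With the dictionary in place I would apply Theorem~\ref{THM:A} to $\F$ and follow its dichotomy. Suppose its second conclusion holds: $\F=p^{*}\mathcal R$ for a transversely affine Riccati foliation $\mathcal R$ on a ruled surface $\Pi\colon S\to B$ and a rational map $p\colon X\dashrightarrow S$. A transversely affine Riccati foliation is, essentially by definition, the Riccati foliation attached to a flat meromorphic connection $\nabla_{0}$ on a rank-two bundle over $B$ (the invariant section of $\mathcal R$ supplying the invariant sub-line-bundle of $\nabla_{0}$, whose linear-part form is a horizontal datum pulled back from $B$); composing $X\dashrightarrow S\xrightarrow{\ \Pi\ }B$ gives $\pi_{0}\colon X\dashrightarrow B$ with $\pi_{0}^{*}\nabla_{0}$ having the same projectivization as $\nabla$. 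Comparing the invariant sub-line-bundles with their connections then shows that, for a suitable choice of $\nabla_{0}$, the connection $\nabla$ coincides with $\pi_{0}^{*}\nabla_{0}$ after a birational gauge transformation involving a square root of the rational function relating the transverse affine structure of $\F$ to the standard presentation of $\mathcal R$ — equivalently, a square root of the linear part of the affine monodromy. Extracting this square root costs at most a double cover $p\colon Y\to X$, whence $p^{*}\nabla$ is birationally gauge equivalent to $\pi^{*}\nabla_{0}$ with $\pi=\pi_{0}\circ p$, $C:=B$ and $\deg p\le2$ — the second conclusion of Theorem~\ref{THM:C}. The same argument handles any case in which the foliation underlying $\nabla$ is a fibration, since then $\nabla$ descends to the base curve of the fibration up to a final square-root twist.

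If instead the first conclusion of Theorem~\ref{THM:A} holds, there is a generically finite Galois $p\colon Y\to X$ for which $p^{*}$ of the transverse affine structure becomes of translation type, i.e. presented by a closed rational $1$-form; equivalently, there is a rational function $g$ on $Y$ with $p^{*}\beta=g^{-1}\omega_{0}$ for some closed rational $\omega_{0}$, and $p^{*}\eta=-\tfrac{dg}{g}$, so $p^{*}\alpha=\tfrac12\tfrac{dg}{g}$. Passing to the further double cover on which $\sqrt g$ becomes rational — the composite is again generically finite, and one enlarges it to a Galois cover — and applying the diagonal gauge transformation by $\sqrt g$, the matrix of $p^{*}\nabla$ becomes $\left[\begin{smallmatrix}0&\omega\\0&0\end{smallmatrix}\right]$ with $\omega:=\omega_{0}$ necessarily closed by flatness: this is the first normal form of the first conclusion of Theorem~\ref{THM:C}, the diagonal normal form occurring precisely when $\beta$ could already be removed by a birational unipotent gauge, i.e. when $\nabla$ is completely reducible. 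In every case the monodromy of $p^{*}\nabla$ lies in a group of upper-triangular matrices whose diagonal is governed by a single character, hence is abelian; and since the image of the fundamental group of the complement of the polar locus on $Y$ has finite index in its analogue on $X$, the monodromy of $\nabla$ is virtually abelian.

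The main obstacle is not the dichotomy but the faithfulness of the translation and the book-keeping of the auxiliary covers. One must be certain that the first conclusion of Theorem~\ref{THM:A} trivializes the transverse affine \emph{structure} and not merely the foliation — this is where the precise output of its proof, resting on $h^{1}(X,\C)=0$ and on the structure theory of affine representations of fundamental groups of quasi-projective manifolds, is used — so that $p^{*}\alpha$ becomes birationally trivial with no residual irregular term obstructing the normal forms. And one must verify that the only situations forcing a genuinely non-trivial cover in the second conclusion of Theorem~\ref{THM:C} are exactly those in which Theorem~\ref{THM:A} already supplies a Riccati foliation on a surface (or a fibration), so that the sole extra cost is the single degree-two cover produced by the passage from $\mathrm{Aff}(\C)\subset\mathrm{PGL}_{2}$ to $\mathrm{GL}_{2}$, keeping $\deg p\le2$ as claimed.
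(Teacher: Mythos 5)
Your proposal follows essentially the same route as the paper: put $\nabla$ in upper-triangular $\mathfrak{sl}(2)$ form, read off a transversely affine foliation from the off-diagonal entry with the diagonal entry as its connection form, handle the rational-first-integral case by descending to $\mathbb P^1$, run Theorem \ref{THM:A} on the rest, and reach the normal forms by explicit gauge transformations, the degree-two cover arising exactly from the diagonal gauge $\sqrt{g}\oplus 1/\sqrt{g}$. One detail to tighten: in case (1) of Theorem \ref{THM:A} the linear part is not $-d\log g$ on the nose but only $d\log g + \lambda\,\beta'$ for some $\lambda\in\C$ and $\beta'$ the closed form defining $\F$ (this is Proposition \ref{uniqueness}, using that $\F$ has no rational first integral), so after the square-root gauge one gets $\eta=\lambda\omega$ with $\omega$ closed, and the dichotomy nilpotent/diagonal is exactly $\lambda=0$ versus $\lambda\neq 0$ (removed by a constant unipotent gauge) --- which is the paper's argument and matches your ``completely reducible'' caveat.
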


\bigskip

\subsection*{Acknowledgements} We would like  to thank  Frank Loray. He contributed to this paper through numerous discussions and provided key ideas to the study of transversely affine foliations with monodromy in $(\mathbb C^*,\cdot) \subset \Aff$ which lead us to the  proof of Theorem \ref{T:mult}; he also suggested to rephrase Theorem \ref{THM:A} in terms of $\mathfrak{sl}(2)$-connections as is done in  Theorem \ref{THM:C}. We would  like to thank Hossein Movasati for explaining to us the basic properties of  Picard-Fuchs equations.

\tableofcontents

\section{Transversely affine foliations}

\subsection{Definition}\label{S:definition}
Let $\mathcal F$ be a codimension one holomorphic foliation on a complex manifold $X$ with normal bundle
$\calN{\F}$, i.e. $\mathcal F$ is defined by a holomorphic  section $\omega$ of $\calN{\F}\otimes \Omega^1_X$ with
zero locus of codimension $\geq 2$ and satisfying $\omega \wedge d \omega =0$. A   {\bf singular transverse affine structure} for $\mathcal F$ is a meromorphic flat connection
\[
\nabla : \calN\F \longrightarrow \calN\F \otimes \Omega^1_X(*D), \mbox{ satisfying } \nabla(\omega)=0; \,
\]
where $D$ is a reduced divisor on $X$ and $\Omega^1_X(*D)$ is the sheaf of meromorphic $1$-forms on $X$ with poles (of arbitrary order)
along $D$.
We will always take $D$ minimal, in the sense that the connection form of  $\nabla$ is not holomorphic in any point of $D$. The divisor $D$ is the {\bf singular divisor} of
the transverse affine structure.

A codimension one foliation $\mathcal F$ is a {\bf singular transversely affine foliation} if it admits a singular transverse affine structure. Aiming at simplicity, from now on we will
omit the adjective singular when talking about singular transverse  affine structures and singular transversely affine foliations.

As will be seen in Example  \ref{E:nonunique},  the same holomorphic foliation can admit more
than one transverse affine structure. When we want to keep track of the transverse affine
structure, we write ($\mathcal F, \nabla)$ instead of $\mathcal F$.

\subsection{Interpretation in terms of rational $1$-forms}\label{S:interpretation}
When $X$ is an algebraic manifold, the transverse affine structure can be defined by rational $1$-forms.
If $\omega_0$ is a rational $1$-form  defining $\mathcal F$  then the existence of a meromorphic flat connection on $\calN\F$ satisfying $\nabla(\omega)=0$
is equivalent to the existence of a rational $1$-form $\eta_0$ such that
\[
d \omega_0 =  \omega_0 \wedge \eta_0  \quad \text{ and } \quad d \eta_0 = 0.
\]
Indeed, if $U$ is an arbitrary open subset of a complex manifold $X$  where $\calN\F$ is trivial then a  flat meromorphic connection on a trivialization of  $\calN\F$ in $U$
can be expressed as
\[
\nabla_{|U} ( f ) = df +  f \otimes \eta_0 \, ,
\]
where $\eta_0$ is a closed meromorphic $1$-form which belongs to $H^0(U,\Omega^1_X(*D)_{|U})$. If $\omega_0$ represents $\omega$ on $U$
then $\nabla_{|U}(\omega_0) = d \omega _0 + \eta_0 \wedge \omega_0$ and $\nabla(\omega)=0$ is  equivalent to $d \omega_0 =  \omega_0 \wedge \eta_0$. If $X$ is
algebraic we can trivialize $N\mathcal F$ in the Zariski topology and get the sought pair of rational $1$-forms.

Most of time we will work with $U$ an affine open subset of a projective manifold $X$. At some points we will need to work with open subsets
in the analytic topology, as we are going to make use of results on the normal forms of singularities of codimension one foliations.

Notice that a change of trivialization does change $\eta_0$, and also changes $\omega_0$. If the pair $(\omega_0,\eta_0)$ represents $(\omega, \nabla)$ in a given trivialization
over $U$ then in another trivialization over $U$  the representatives will be of the form $(g\omega_0, \eta_0 - d \log g)$ for a suitable nowhere vanishing function $g \in \mathcal O_X(U)^*$.

The equality $d \omega_0 =  \omega_0 \wedge \eta_0 $ implies that the (multi-valued) $1$-form $ \exp(\int \eta_0) \omega_0$ is closed. Its primitives are first integrals for the
foliation $\mathcal F$. These first integrals belong to a Liouvillian extension of the field of rational functions on $X$, and conversely
the existence of a non-constant Liouvillian first integral for  $\mathcal F$  implies that $\mathcal F$ is transversely affine, see \cite{MR1062869}.

Even if $\omega_0$ and $\eta_0$ may have poles in the complement of $D$, the multi-valued function $\int \exp(\int \eta_0) \omega_0$ coincides with the developing map of $\F_{\vert X-(D\cup \sing\F)}$ and extends holomorphically to the universal covering of  $X-D$. Indeed, at a point $p$ in the polar set of $\eta_0$ or of $\omega_0$ which do not belong to $D$, we can choose another pair $(\omega_0',\eta_0')$ of rational $1$-forms, regular at $p$,
defining locally the foliation $\mathcal F$ and the connection $\nabla$.

For any given base point $q \in X -D$,  its  monodromy  is an anti-representation $\varrho$ of the fundamental group of the complement of
$D$ in $X$ to the affine group $\Aff = \mathbb C^* \ltimes \mathbb C$. The linear part of $\varrho$ will
be denoted by $\rho$. It coincides with the monodromy of $\nabla$.
\begin{center}
\begin{tikzpicture}
  \matrix (m) [matrix of math nodes,row sep=3em,column sep=4em,minimum width=2em]
  {
    \pi_1(X-D) & \Aff \\
      \, & \mathbb C^* \\};
  \path[-stealth]
    (m-1-1) edge node [above] {$\varrho$} (m-1-2)
            edge node [below] {$\rho$} (m-2-2)
    (m-1-2)  edge node [below] {} (m-2-2)  ;
\end{tikzpicture}
\end{center}
Here and throughout the paper we will deliberately omit the base point of the fundamental groups. This should not lead to any confusion.
Notice that  for any path $\gamma$ contained in the locus where both $\omega_0$ and $\eta_0$ are
regular we can write
\begin{align*}
\rho(\gamma) & = \left\{ z \mapsto z \cdot \exp\left( \int_\gamma \eta_0\right) \right\} \quad \text{ and }  \\
\varrho(\gamma) & = \left\{ z \mapsto z \cdot \exp\left( \int_\gamma \eta_0\right) +   \int_{\gamma} \exp \left( \int  \eta_0\right) \omega_0 \right\} \, .
\end{align*}

\subsection{Singular divisor and residues}\label{S:Singular}
Recall from the previous section that the singular divisor of  a transverse affine structure $\nabla$ is nothing but
 the reduced divisor of poles of $\nabla$.

\begin{prop}
The irreducible components of the singular divisor $D$ of a transverse affine structure $\nabla$
for a codimension one foliation $\mathcal F$ are invariant  by $\mathcal F$.
\end{prop}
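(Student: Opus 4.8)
The plan is to work locally near a smooth point $p$ of an irreducible component $D_0$ of the singular divisor and show that the leaf through $p$ must be contained in $D_0$, which by irreducibility forces $D_0$ to be $\mathcal F$-invariant. So first I would choose local analytic coordinates $(x, y_2, \dots, y_n)$ centered at a generic (hence smooth) point $p$ of $D_0$ with $D_0 = \{x = 0\}$, and pick rational $1$-forms $(\omega_0, \eta_0)$ representing $(\omega, \nabla)$ on a neighborhood, with $\eta_0$ having a genuine pole along $\{x=0\}$ (possible since $D$ is taken minimal). Since $\eta_0$ is closed with poles along $\{x=0\}$ only, I can write $\eta_0 = \lambda \frac{dx}{x} + dg$ for some $\lambda \in \mathbb C$ (the residue) and $g$ meromorphic with poles only along $\{x=0\}$; after the gauge change $(\omega_0, \eta_0) \mapsto (e^{g}\omega_0, \eta_0 - dg)$ — legitimate locally if $e^g$ makes sense, otherwise I argue directly with the relation — I may assume $\eta_0 = \lambda \frac{dx}{x} + (\text{holomorphic closed form})$, and the holomorphic closed piece can be absorbed too, so effectively $\eta_0$ has a logarithmic pole along $D_0$ with nonzero residue $\lambda$, or possibly a higher-order pole.

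The key computation is then the integrability relation $d\omega_0 = \omega_0 \wedge \eta_0$. Write $\omega_0 = x^{-N} \tilde\omega$ where $\tilde\omega$ is holomorphic and $x \nmid \tilde\omega$ (clearing the pole of $\omega_0$ along $D_0$; if $\omega_0$ is already holomorphic there, $N=0$). Plugging into $d\omega_0 = \omega_0 \wedge \eta_0$ and multiplying through by $x^{N+1}$ or the appropriate power to clear denominators, I compare the terms that are non-divisible by $x$. The point is that the left side $d\omega_0$ contributes a term $-N x^{-N-1} dx \wedge \tilde\omega + x^{-N} d\tilde\omega$, while the right side contributes $x^{-N}\tilde\omega \wedge \eta_0$, whose most polar part is $\lambda x^{-N-1}\tilde\omega \wedge \frac{dx}{x}\cdot x = \lambda x^{-N-1} dx$-type terms. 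Matching the coefficient of $x^{-N-1} dx$ (the most singular part) yields $(\lambda + N)\, dx \wedge \tilde\omega|_{x=0} = 0$ (up to sign), equivalently $dx \wedge \tilde\omega \equiv 0 \pmod x$, i.e. $\tilde\omega|_{\{x=0\}}$ is proportional to $dx|_{\{x=0\}}$; hence $\tilde\omega = a\, dx + x\, \beta$ for holomorphic $a, \beta$. But then $\omega \wedge dx = x^{-N}(\tilde\omega \wedge dx) = x^{1-N} \beta\wedge dx$ vanishes on $\{x=0\}$, which is exactly the statement that $D_0 = \{x=0\}$ is tangent to $\mathcal F$, i.e. $\mathcal F$-invariant.

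The one subtlety is the case $\lambda + N = 0$, i.e. the residue of $\eta_0$ along $D_0$ is the negative integer $-N$ coming from the order of pole of $\omega_0$ — in that case one must look at the next term in the $x$-adic expansion of the relation $d\omega_0 = \omega_0\wedge\eta_0$ to extract the vanishing of $dx\wedge\tilde\omega$ modulo $x$; this is a routine but slightly delicate bookkeeping of the Laurent expansions in $x$ of $\tilde\omega$, $\eta_0$, and their exterior derivatives, and it is where I expect the only real friction. Alternatively — and this is probably cleaner — I would avoid the division $\omega_0 = x^{-N}\tilde\omega$ altogether: since $D$ is minimal, $\eta_0$ is genuinely singular along $D_0$, so writing $\eta_0 = \frac{h}{x^k}\frac{dx}{1} + \dots$ with $h|_{x=0}\not\equiv 0$ and $k\geq 1$ (or a log term), the relation $d\omega_0 = \omega_0\wedge\eta_0$ forces $\omega_0\wedge\eta_0$ to have no pole worse than $d\omega_0$ along $D_0$; examining the polar part along $D_0$ of $\omega_0\wedge\eta_0$ and using that $\eta_0$'s polar part is a multiple of $dx$, I again conclude $\omega_0\wedge dx \equiv 0 \pmod{x}$ after clearing the (bounded order) pole of $\omega_0$, giving invariance. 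Either way, the heart of the argument is the purely local residue/pole-order matching in the integrability equation, and the global statement follows since $\mathcal F$-invariance of the generic smooth point of each $D_0$ propagates to all of $D_0$.
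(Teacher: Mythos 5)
Your strategy is the same as the paper's --- match polar parts along $D_0$ in the integrability relation $d\omega_0=\omega_0\wedge\eta_0$ at a general point of the component --- but the ``friction'' you flag in your exceptional case is a genuine gap, and your proposed fix does not close it. The source of the trouble is that you allow $\omega_0$ to have a pole of order $N$ along $D_0$ while still reading the minimality of $D$ as ``$\eta_0$ has a genuine pole along $D_0$''. These two choices are incompatible: under a change of trivialization $(\omega_0,\eta_0)\mapsto(g\omega_0,\eta_0-d\log g)$ the residue of $\eta_0$ along $D_0$ and the vanishing/pole order of $\omega_0$ along $D_0$ shift by opposite integers, so ``having a pole along $D_0$'' is not an intrinsic property of $\eta_0$; what minimality controls is the connection form computed in a trivialization where $\omega_0$ is holomorphic with zero set of codimension $\geq 2$. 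Your exceptional case is exactly the case where the logarithmic pole of $\eta_0$ is an artifact of the trivialization, i.e.\ where $\nabla$ is in fact regular along $D_0$ and $D_0$ is not a component of $D$ at all. Concretely, $\omega_0=x^{-N}dy$ and $\eta_0=N\,\frac{dx}{x}$ satisfy $d\omega_0=\omega_0\wedge\eta_0$ identically, $\eta_0$ has a genuine logarithmic pole along $\{x=0\}$, and $\{x=0\}$ is not invariant by the foliation $dy=0$. Hence no amount of ``looking at the next term in the $x$-adic expansion'' can rescue the exceptional case: the relation holds to all orders there. Your ``cleaner'' second route inherits the same hole when $k=1$.

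The repair is the normalization the paper uses. At a general point of $D_0$ take $\omega_0$ to be the holomorphic local representative of the defining section $\omega\in H^0(X,N\mathcal F\otimes\Omega^1_X)$, so that $N=0$; minimality of $D$ then says precisely that the corresponding $\eta_0$ is not holomorphic there. Writing the principal part of the closed form $\eta_0$ as $h(x)\,dx/x^k$ with $h(0)\neq 0$ (this uses $d\eta_0=0$), the right-hand side $\omega_0\wedge\eta_0$ has polar part $h(x)\,\omega_0\wedge dx/x^k$ while the left-hand side $d\omega_0$ is holomorphic, so $x$ must divide $\omega_0\wedge dx$, which is the invariance of $\{x=0\}$. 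In this normalization your exceptional case would be $k=1$ with residue $0$, i.e.\ $\eta_0$ holomorphic, which is excluded by minimality. The rest of your outline (working at a general smooth point, using closedness of $\eta_0$, propagating invariance by irreducibility) is fine and agrees with the paper.
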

\begin{proof}
Let $f$ be a local equation for an irreducible component $C$  of $D$ and $(\omega_0,\eta_0)$ be a local pair describing $(\F,\nabla)$
at a sufficiently small neighborhood of general point of $C$. The equation $d\eta_0=0$ imposes $\eta_0=\alpha+h(f)\frac{df}{f^k}$ with $\alpha$ a germ of holomorphic $1$-form
and $h$ a germ of  holomorphic function on $(\mathbb C,0)$ not vanishing at zero.
The  equation $d\omega_0=\omega_0\wedge \eta_0$ implies that $f$ divides $\omega_0 \wedge df$, i.e.  $C=\{f=0\}$ is $\F$-invariant.
\end{proof}

To each irreducible component $C$ of $D$ we can attach a complex number $\Res_C(\nabla)$,
defined as the residue
of any local meromorphic $1$-form $\eta_0$ defining $\nabla$ at a general point $p$ of $C$, i.e.
\[
\Res_C(\nabla) = \frac{1}{2i\pi}\int_\gamma \eta_0 \,
\]
for $\gamma$ equal to the boundary of a  disc intersecting $(\nabla)_{\infty}$ transversely  at $p$, and only at $p$.
The flatness of $\nabla$ (i.e. closedness of $\eta_0$) implies that this  complex number is independent of the choices of
$\eta_0$, $p$, and  $\gamma$.

\begin{prop}\label{P:log}
Let $X$ be a projective manifold. If $\nabla$ is any flat meromorphic connection on a line-bundle  $\mathcal L$ then  the class of
$-\sum \Res_{C}(\nabla) [C]$ in $H^2(X,\mathbb C)$,  with  the summation ranging over the irreducible components of the singular divisor $D$, represents the Chern
class of $\mathcal L$. Reciprocally, given a $\mathbb C$-divisor $R=\sum \lambda_C C$ with the same class in $H^2(X, \mathbb C)$ as a line bundle $\mathcal L$, there exists a flat meromorphic connection $\nabla_{\mathcal L}$ on
$\mathcal L$ with logarithmic poles and $\Res(\nabla_{\mathcal L})= - R$.
\end{prop}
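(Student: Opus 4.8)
The plan is to translate, via a rational section, between flat meromorphic connections on $\mathcal L$ and closed rational $1$-forms, and then to quote two facts from Hodge theory. \emph{Direct implication.} I would choose a rational section $s$ of $\mathcal L$; over $X\setminus\mathrm{div}(s)$ it trivializes $\mathcal L$, and $\omega:=\nabla s/s$ is a rational $1$-form on $X$, closed because $\nabla$ is flat (its curvature is $d\omega$). Computing in a local holomorphic frame of $\mathcal L$ near a general point of an irreducible hypersurface $C$ gives
\[
\Res_C(\omega)=\mathrm{ord}_C(s)+\Res_C(\nabla),
\]
where $\Res_C(\nabla):=0$ if $C$ is not a component of $D$. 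I would then invoke the residue theorem for closed rational $1$-forms on a smooth projective variety — reducing to a normal crossings polar divisor by blowing up, computing there, and pushing the resulting identity forward to $X$, cf.\ \cite{MR0417174} — which says $\sum_C\Res_C(\omega)[C]=0$ in $H^2(X,\C)$. Since $c_1(\mathcal L)=[\mathrm{div}(s)]=\sum_C\mathrm{ord}_C(s)[C]$, subtracting the two identities gives $c_1(\mathcal L)=-\sum_C\Res_C(\nabla)[C]=-\sum_{C\subset D}\Res_C(\nabla)[C]$.

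\emph{Converse.} I would keep a rational section $s$, with $\mathrm{div}(s)=\sum_C a_C C$, and write $R=\sum_C\lambda_C C$ (allowing zero coefficients). The hypothesis $[R]=c_1(\mathcal L)$ becomes $\sum_C\mu_C[C]=0$ in $H^2(X,\C)$, where $\mu_C:=a_C-\lambda_C$. It then suffices to construct a closed rational $1$-form $\theta$ with only logarithmic poles, supported on the union of the $C$'s appearing above, and with $\Res_C(\theta)=\mu_C$ for all $C$: the meromorphic connection $\nabla_{\mathcal L}$ on $\mathcal L$ determined by $\nabla_{\mathcal L}s=\theta\otimes s$ is then flat, and in a local holomorphic frame $e$ its connection form is $d\log(e/s)+\theta$, whose residue along $C$ equals $-a_C+\mu_C=-\lambda_C$. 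Both summands being logarithmic, this connection form is logarithmic along $C$; and a logarithmic pole with zero residue is no pole, so the poles contributed along the components of $\mathrm{div}(s)$ that do not occur in $R$ cancel. Hence $\nabla_{\mathcal L}$ is logarithmic with singular divisor exactly the support of $R$ and $\Res(\nabla_{\mathcal L})=-R$.

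\emph{Construction of $\theta$ and main difficulty.} Because the classes $[C]$ are algebraic, the $\C$-vector space of linear relations among them is the complexification of the $\mathbb Q$-vector space of such relations; so I can write $(\mu_C)=\sum_j z_j(\mu^{(j)}_C)$ with $z_j\in\C$ and, after clearing denominators, each $(\mu^{(j)}_C)$ integral with $\sum_C\mu^{(j)}_C[C]=0$ in $H^2(X,\mathbb Q)$. For each $j$, the line bundle $\mathcal O_X(Z_j)$ with $Z_j:=\sum_C\mu^{(j)}_C C$ has $c_1=0$ in $H^2(X,\C)$; hence its Atiyah class in $H^1(X,\Omega^1_X)$, which is the $(1,1)$-component of $c_1$, vanishes, so $\mathcal O_X(Z_j)$ admits a holomorphic connection. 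On a projective manifold this connection is automatically flat, since its curvature is a holomorphic $2$-form representing a trivial class and holomorphic forms are closed and harmonic. Applying the computation of the direct implication to this holomorphic connection and to the canonical rational section of $\mathcal O_X(Z_j)$ produces a logarithmic closed rational $1$-form $\theta_j$ with $\Res_C(\theta_j)=\mu^{(j)}_C$; then $\theta:=\sum_j z_j\theta_j$ has the required properties. The genuine inputs — and the points where projectivity of $X$ (i.e.\ Hodge theory) really enters, as announced in the introduction — are the residue theorem used in the first part and the flatness statement just used; I expect the former, in our possibly non-normal-crossings setting, to be the main obstacle to pin down cleanly, while the rest is routine manipulation of local models of logarithmic connections.
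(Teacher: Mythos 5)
Your proposal is correct, but it reaches the statement by a genuinely different route at both steps, so a comparison is in order. For the direct implication the paper restricts $\nabla$ to general curves and quotes the one--dimensional case (degree equals minus the sum of residues), which then requires the nondegeneracy of the intersection pairing on N\'eron--Severi classes to promote the resulting degree identities to an identity in $H^2(X,\C)$; you instead use the section $s$ to reduce to the case $\mathcal L=\mathcal O_X$, i.e.\ to the residue theorem $\sum_C \Res_C(\omega)[C]=0$ for a closed rational $1$-form $\omega$. That theorem --- the one step you rightly flag as delicate --- is closed most cleanly not by explicit computation on a normal crossing model but by the Gysin sequence: after a log resolution, for $U$ the complement of the (now normal crossing) polar divisor, the composition $H^1(U,\C)\to\bigoplus_C H^0(C,\C)(-1)\to H^2(X,\C)$ vanishes, the first map sends $[\omega|_U]$ to its residues (this works for poles of arbitrary order) and the second sends $(\lambda_C)$ to $\sum_C\lambda_C[C]$; pushing forward along the resolution kills the exceptional classes and gives the identity on $X$. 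For the converse the paper passes to a log resolution and uses the residue exact sequence $0\to\Omega^1_X\to\Omega^1_X(\log D')\to\oplus\,\mathcal O_C\to 0$, whose boundary map is $(\lambda_C)\mapsto\sum_C\lambda_C[C]$, to produce in one stroke a global closed logarithmic form with prescribed residues, and then twists by $d\log$ of a rational section; you instead decompose the residue vector into integral relations, realize each integral relation as $\nabla^{(j)}s_j/s_j$ for a holomorphic (hence, by harmonicity of holomorphic $2$-forms, flat) connection on a numerically trivial line bundle, and take a $\C$-linear combination. Both converses rest on the same Hodge-theoretic inputs --- the injection $H^1(X,\Omega^1_X)\hookrightarrow H^2(X,\C)$ and the closedness of holomorphic, respectively logarithmic, forms on compact K\"ahler manifolds --- but yours avoids the log resolution altogether and makes the role of the Atiyah class explicit, at the price of the linear-algebra reduction to integral relations; the paper's is shorter once the exact sequence is granted. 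Your observation that the zero-residue logarithmic poles along $\mathrm{div}(s)$ disappear (so the singular divisor of $\nabla_{\mathcal L}$ is exactly the support of $R$) is a correct and worthwhile addition that the paper leaves implicit.
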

\begin{proof}
When $X$ is a curve it is well-known that the Chern class of a line-bundle $\mathcal L$ with a meromorphic connection
$\nabla$ can be recovered from $-\sum \Res_{C}(\nabla) [C]$ in $H^2(X,\mathbb C)$, see for instance \cite[Chapter IV, Exercise 1.10]{Sabbah}. The general case can be proved by restriction of $\nabla$ to  general curves in $X$.

To realize a $\mathbb C$-divisor $R=\sum \lambda_C C$ as the residue divisor of a logarithmic connection with singular divisor $D= \sum  C$, we can replace
the pair $(X,D)$ by a log resolution since it suffices to construct the sought connection on $\mathcal L$ in the complement of a codimension two analytic subset and then extend it using Hartog's Theorem.

Suppose without loss of generality that  $D$ is a simple normal crossing divisor. We have the exact sequence
\[
0 \to \Omega^1_X \longrightarrow \Omega^1_X(\log D) \longrightarrow \oplus \mathcal O_C \to 0
\]
with the first arrow given by the inclusion and the second arrow given by the  residue map, cf. \cite[Chapter 6]{MR2114696}.
The boundary map $\oplus H^0(C, \mathcal O_C) \to H^1(X,\Omega^1_X)$ sends  $(\lambda_C)$ to  $\sum \lambda_C [ C]$, where
$[C]$ is the Chern class of $\mathcal O_X(C)$ in $H^1(X,\Omega^1_X) \subset H^2(X,\mathbb C)$. The inclusion of $H^1(X,\Omega^1_X)$
in $H^2(X,\mathbb C)$ is given by Hodge decomposition.

Suppose   $c_1(\mathcal L)=-c_1(R)=- \sum \lambda_C [C]$ in $H^1(X,\Omega^1_X)$. Let $s$ be a general rational section of $\mathcal L$. Here by general we
mean that  $D' = (s)_0 + (s)_{\infty} + D$ is a simple normal crossing divisor. Then there exists a logarithmic $1$-form $\eta$ with poles in $D'$ and residue divisor
equal to $R_0=R + (s)_{0} - (s)_{\infty}$. Since logarithmic $1$-forms on compact K\"ahler manifolds are closed \cite[Chapter 6]{MR2114696},
the connection $\nabla_0 = d + \eta$ is a flat connection on the trivial line-bundle with residue divisor equal to $R_0$.
Let $\nabla_1$ be the rational connection on $\mathcal L$ satisfying
$\nabla_1(s)=0$. If $s$ is given in a trivialization of $\mathcal L$ by a rational function $f$ then, locally, $\nabla_1 = d - \frac{df}{f}$. Therefore $\nabla_1$ is a flat logarithmic
connection with  residue divisor  $(s)_{\infty} - (s)_0$. The tensor product $\nabla_0\otimes \nabla_1$, obtained by summing up the local connection forms,
is a flat logarithmic  connection in $\mathcal L$  with residue divisor equal to $R_0  + (s)_{\infty} - (s)_0 = R$.
\end{proof}

\begin{remark}\label{R:trivial}
Two flat meromorphic connections $\nabla_1$ and $\nabla_2$  on the same line-bundle $\mathcal L$ differ
by a closed rational $1$-form, i.e. $\nabla_1 - \nabla_2 = \beta$ for $\beta$ a closed rational $1$-form. If the residues of $\nabla_1$ and $\nabla_2$ coincide then $\beta$ has no residues; in particular, when $h^1(X,\mathbb C)=0$, the rational  $1$-form $\beta$  is the differential of a rational function.
\end{remark}

\subsection{Examples and first properties} We collect below the standard examples  of transversely affine codimension one foliations and some basic properties concerning
the (non) uniqueness of transverse affine structure for a given foliation.

\begin{example}[Foliations with rational first integral]
If $F: X\dasharrow C$ is a dominant rational map to a curve, then $\omega_0=dF$ is a rational form which defines a transversely affine codimension one foliation.
It has many different transverse affine structures, see example \ref{E:nonunique} below.
\end{example}

\begin{example}[Foliations defined by closed $1$-forms]\label{E:nonunique}
If $\mathcal F$ is a codimension one foliation on a projective manifold defined by a closed rational $1$-form $\omega_0$,
then $\mathcal F$ admits a family of pairwise distinct  transverse affine structures parametrized by $\alpha \in \mathbb C$.  Indeed, for any constant $\alpha \in \mathbb C$ we have that $\eta_0 = \alpha \omega_0$
is closed and satisfies $d\omega_0 = \omega_0 \wedge \eta_0$. If $\alpha\neq 0$, since the monodromy is obtained by the analytic continuation of $F = \int  \exp(\int \alpha \omega_0) \omega_0 = \int \exp(\alpha G) dG= \exp(\alpha G)/\alpha$, where $G = \int \omega_0$, it must be of the form $\gamma \mapsto (\exp  \int_{\gamma} \omega_0)^{\alpha}\in \C^*$. If $\alpha=0$,  $\int  \exp(\int \alpha \omega_0) \omega_0=\int\omega_0$, and the monodromy is $\gamma \mapsto \int_{\gamma} \omega_0\in (\C,+)$.
\end{example}

\begin{prop}\label{uniqueness}
Let $\mathcal F$ be a codimension one foliation on a projective manifold $X$. Suppose $\F$ admits two distinct transverse affine structures.
Then $\F$ is defined by a closed rational $1$-form.   Moreover, if $\mathcal F$ does not admit a non-constant rational first integral then every transverse affine structure for $\mathcal F$ belongs to the one-parameter family presented in  Example \ref{E:nonunique}.
\end{prop}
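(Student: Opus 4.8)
The plan is to run everything through the dictionary of Section~\ref{S:interpretation} between transverse affine structures and pairs of rational $1$-forms. Fix once and for all a rational $1$-form $\omega_0$ defining $\mathcal F$, which exists because $X$ is projective so that $N\mathcal F$ is trivial in the Zariski topology. Read in this fixed trivialization, a transverse affine structure $\nabla$ is a closed rational $1$-form $\eta$ with $d\omega_0=\omega_0\wedge\eta$, and distinct structures give distinct such $\eta$'s (the $\eta$ is literally the connection form of $\nabla$ in this trivialization). So let $\nabla_1\neq\nabla_2$ be two transverse affine structures for $\mathcal F$, with associated forms $\eta_1\neq\eta_2$. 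Subtracting the relations $d\omega_0=\omega_0\wedge\eta_1$ and $d\omega_0=\omega_0\wedge\eta_2$ gives $\omega_0\wedge(\eta_1-\eta_2)=0$, hence $\eta_1-\eta_2=g\,\omega_0$ for a rational function $g$, with $g\not\equiv 0$ since $\eta_1\neq\eta_2$. Being a difference of closed forms, $\eta_1-\eta_2$ is a closed rational $1$-form defining $\mathcal F$; this settles the first assertion, and we set $\omega_\star:=\eta_1-\eta_2$.

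For the second assertion, suppose in addition that $\mathcal F$ has no non-constant rational first integral. First note that any two closed rational $1$-forms defining $\mathcal F$ are proportional by a rational function $h$ with $dh\wedge\omega_\star=0$ (because both forms are closed), i.e. $h$ is a rational first integral of $\mathcal F$, hence constant; so $\omega_\star$ is determined up to a scalar and the one-parameter family of Example~\ref{E:nonunique} attached to it is intrinsic. Now take an arbitrary transverse affine structure $\nabla$ for $\mathcal F$ and describe it through the dictionary using the closed form $\omega_\star$ itself as the reference defining form: $\nabla$ corresponds to a closed rational $1$-form $\eta$ with $d\omega_\star=\omega_\star\wedge\eta$, which since $d\omega_\star=0$ reads simply $\omega_\star\wedge\eta=0$. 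Therefore $\eta=h\,\omega_\star$ for a rational function $h$, and $d\eta=0$ forces $dh\wedge\omega_\star=0$; so again $h$ is a rational first integral of $\mathcal F$, hence a constant $\alpha$. Thus $\nabla$ is the member $\eta=\alpha\,\omega_\star$ of the family of Example~\ref{E:nonunique}, as claimed.

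The argument is essentially formal, so I do not expect a genuine obstacle; the only points needing (routine) care are bookkeeping ones. To justify $\omega_0\wedge\theta=0\Rightarrow\theta=g\,\omega_0$ with $g$ rational, one works on the Zariski-dense open set where $\omega_0$ is holomorphic and non-vanishing, where $g=\theta/\omega_0$ is a well-defined holomorphic, hence rational, function; one then observes that on $\{g\in\mathbb C^*\}$ the kernels of $g\,\omega_0$ and of $\omega_0$ agree, so $g\,\omega_0$ really does define $\mathcal F$. One should be content here with a possibly non-reduced closed defining $1$-form, since clearing a common factor from $g\,\omega_0$ may destroy closedness, and the non-reduced form suffices for all later uses. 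Finally, the step $dh\wedge\omega_\star=0\Rightarrow h$ is a first integral is just the local normal form of a codimension one foliation at a generic point of $X$.
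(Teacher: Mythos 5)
Your proof is correct and follows essentially the same route as the paper: subtract the two integrability relations to get $\omega_0\wedge(\eta_1-\eta_2)=0$, so the closed form $\eta_1-\eta_2$ defines $\mathcal F$; then, reading an arbitrary structure against a closed defining form, the connection form is $h$ times that form with $h$ a rational first integral, hence constant. The extra bookkeeping you supply (rationality of $g$, intrinsicness of the family, tolerance of non-reduced defining forms) is sound and merely makes explicit what the paper leaves implicit.
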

\begin{proof}
If $\F$ admits two distinct transverse affine structures,
then for any  rational $1$-form $\omega_0$ defining $\mathcal F$, there exist two distinct closed rational $1$-forms $\eta_1$ and $\eta_2$ such that
\[
   d \omega_0 = \omega_0 \wedge \eta_i , \, i=1,2 \, .
\]
Therefore $\omega_0 \wedge (\eta_1 - \eta_2)=0$ and consequently $\eta_1 - \eta_2$ is a closed rational $1$-form defining $\mathcal F$.

If $\mathcal F$ is defined by a closed rational $1$-form $\omega_0$ and $(\omega_0,\eta_0)$ represents $(\F,\nabla)$, then $\eta_0$  must satisfy $\omega_0 \wedge \eta_0=0$. Therefore $\eta_0 = h \omega_0$ for a suitable rational function. Differentiation shows that $h$ must be constant along
the leaves of $\mathcal F$, i.e. $h$ is a rational first integral for $\mathcal F$. If $\mathcal F$ does not admit a non-constant rational first integral then
we are in the situation described in Example \ref{E:nonunique}. \end{proof}

If $\mathcal F$ is defined by a closed rational $1$-form $\omega_0$, the case $\alpha=0$ in Example $\ref{E:nonunique}$ says there exists an transversely affine  structure for $\mathcal F$
which has at worst logarithmic poles at the zeros and poles of $\omega_0$ and has additive monodromy group. The converse of this statement also holds true.

\begin{prop}
Let $(\mathcal F,\nabla)$  be a transversely affine codimension one foliation on a projective manifold $X$. If $\nabla$ has at worst logarithmic singularities
and the monodromy group of $(\mathcal F,\nabla)$ is contained in $(\mathbb C,+)$, then $\mathcal F$ is defined by a closed rational $1$-form.
\end{prop}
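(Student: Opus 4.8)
The plan is to exploit the interpretation of a transverse affine structure in terms of rational $1$-forms together with the assumed logarithmic behaviour of $\nabla$ and the additivity of the monodromy. Fix a rational $1$-form $\omega_0$ defining $\mathcal F$ and a closed rational $1$-form $\eta_0$ representing $\nabla$ in some Zariski-open trivialization of $N\mathcal F$, so that $d\omega_0 = \omega_0 \wedge \eta_0$ and $d\eta_0 = 0$. Since $\nabla$ has at worst logarithmic singularities, $\eta_0$ is a closed \emph{logarithmic} $1$-form, hence by the residue theorem on the projective manifold $X$ its periods are $\mathbb Z$-linear combinations of its residues $\Res_C(\nabla)$; equivalently, $\eta_0 = \sum \lambda_C \frac{dg_C}{g_C} + (\text{exact})$ modulo the choice of trivialization. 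The hypothesis that the monodromy group of $(\mathcal F,\nabla)$ lies in $(\mathbb C,+)$ forces the \emph{linear} part $\rho$ to be trivial, and from the formula $\rho(\gamma) = \{ z \mapsto z \exp(\int_\gamma \eta_0)\}$ this says precisely that every period of $\eta_0$ lies in $2\pi i\,\mathbb Z$; in particular every residue $\Res_C(\nabla)$ is an integer.

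The next step is to absorb these integer residues into a change of trivialization. Because all residues of $\eta_0$ are integers, the $1$-form $\eta_0$ is the logarithmic derivative of a rational function up to a holomorphic closed $1$-form: concretely, writing $g = \prod g_C^{\,\Res_C(\nabla)}$ for local equations $g_C$ of the components $C$ of the singular divisor (a bona fide rational function on $X$ since the exponents are integers), the $1$-form $\eta_0 - \frac{dg}{g}$ is a closed rational $1$-form with no residues. Here I would invoke the hypothesis $h^1(X,\mathbb C)=0$ — available because the proposition sits in the context of a projective $X$, and is in fact the mechanism behind Remark \ref{R:trivial} — to conclude that a closed rational $1$-form without residues on $X$ is exact, say equal to $d\log f$ for a rational function $f$; then $\eta_0 = d\log(fg)$ is itself a logarithmic derivative. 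Performing the change of trivialization by the nowhere-vanishing rational function $fg$ replaces $(\omega_0,\eta_0)$ by $( (fg)\,\omega_0,\ \eta_0 - d\log(fg) ) = ( (fg)\omega_0,\ 0)$.

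Finally, with the new representative pair $(\widetilde\omega_0, \widetilde\eta_0) = ((fg)\omega_0, 0)$, the structural equation $d\widetilde\omega_0 = \widetilde\omega_0 \wedge \widetilde\eta_0 = 0$ says exactly that $\widetilde\omega_0$ is a \emph{closed} rational $1$-form defining $\mathcal F$, which is the desired conclusion. I expect the main obstacle to be the bookkeeping in the second step: one must be careful that the residues really are all integers (not merely that the monodromy is abelian) and that the $1$-form $\frac{dg}{g}$ built from the residue divisor is globally rational on $X$ — this is where the projectivity of $X$ and the vanishing of $h^1(X,\mathbb C)$ enter, precisely as in Proposition \ref{P:log} and Remark \ref{R:trivial}. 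A secondary subtlety is to check that the argument is independent of the initial choice of $\omega_0$ and of the Zariski trivialization, but this follows from the transformation rule $(\omega_0,\eta_0)\mapsto (g\omega_0,\eta_0 - d\log g)$ recorded in Section \ref{S:interpretation}.
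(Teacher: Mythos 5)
Your overall strategy is the right one and is in fact close to the paper's: the paper shows directly that $\exp(\int\eta_0)\,\omega_0$ is a single-valued meromorphic $1$-form on $X$, which amounts to showing, as you attempt to, that $\eta_0$ can be killed by a change of trivialization by a rational function. But your middle step has a genuine gap. First, $g=\prod g_C^{\Res_C(\nabla)}$ is not a rational function on $X$: the $g_C$ are only \emph{local} equations (an irreducible divisor on a projective manifold is never principal), and integrality of the exponents does not make the divisor $\sum \Res_C(\nabla)\,C$ principal --- compare $[p]-[q]$ on an elliptic curve. Second, and more seriously, you invoke $h^1(X,\mathbb C)=0$ to make the residue-free part of $\eta_0$ exact. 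That hypothesis is \emph{not} among the hypotheses of this proposition (it appears in Theorem~\ref{THM:A}, not here), and it fails for many projective manifolds (abelian varieties, curves of positive genus, products with these). On such an $X$ a closed logarithmic $1$-form with integer residues need not be a logarithmic derivative, so as written your argument only proves the statement under an extra topological assumption.

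The repair is already contained in your first paragraph. Triviality of the linear monodromy gives that \emph{every} period of $\eta_0$ lies in $2\pi i\,\mathbb Z$; this is strictly stronger than integrality of the residues and is exactly the condition for $h=\exp(\int\eta_0)$ to be single-valued on the complement of the polar set of $\eta_0$, with no appeal to $H^1(X,\mathbb C)$ whatsoever. The logarithmic hypothesis then yields meromorphy across that polar set: at a general point of a polar component the local connection form is $\alpha+\lambda\,\frac{df}{f}$ with $\alpha$ holomorphic closed and $\lambda\in\mathbb Z$, so that locally $h$ is $f^{\lambda}$ times a meromorphic unit and $\exp(\int\eta_0)\,\omega_0$ is meromorphic there. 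Hence $h$ is a global rational function with $\eta_0=dh/h$, and your final change of trivialization goes through verbatim, producing the closed rational $1$-form $h\,\omega_0$. This is precisely the computation in the paper's proof.
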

\begin{proof}
We have a locally well-defined closed meromorphic 1-form $\exp(\int\eta_0)\omega_0$ defining $\F$ in $X-D$, the monodromy hypothesis says it is well-defined in $X-D$.
We only have to check $\exp(\int\eta_0)\omega_0$ extends meromorphically through $D$. In a neighborhood $U$ of any smooth point of $D$, take a local pair $(\omega,\eta)$ representing $(\F,\nabla)$. Notice that  $\eta=\alpha+ \lambda \frac{df}{f}$ for a local equation $f$  of $D$, $\alpha$ closed  holomorphic $1$-form  in $U$ and $\lambda\in \Z$. There exists a meromorphic function $g$ on $U$ such that $(\omega_0,\eta_0)=(g\omega,\eta -\frac{dg}{g})$. Thus $\exp(\int\eta_0) \omega_0=g^{-1}\exp(\int \eta) g\omega=\exp (\int \alpha)f^\lambda \omega$ is meromorphic in $U$. Since $U$ is arbitrary, it follows that
$\exp(\int\eta_0)\omega_0$ is a well-defined meromorphic $1$-form on $X$.
\end{proof}

The hypothesis on the nature of the singularities of $\nabla$ is important in the proposition above.
There exist transversely affine codimension one foliations $\mathcal F$ on projective manifolds which have trivial  monodromy group but are not given by a closed rational $1$-form.

\begin{example}\label{llibre}
A  simple example on $\mathbb P^1 \times \mathbb P^1$ is given in affine
coordinates by
\[
\omega_0 = x^3dy   + 1/2(x+y)dx \quad \text{ and } \quad \eta_0 = - \frac{dx}{x} + \frac{dx}{x^3} \, .
\]
The only invariant curves are $\{x=0\}$ and the $\{ y=\infty \}$.   Proposition $\ref{uniqueness}$ implies that this  foliation cannot be given by a closed rational $1$-form.
It is birationally equivalent to the  one appearing in \cite[Theorem 3]{MR2853194}.
\end{example}

If $\mathcal F$ is a codimension one foliation defined by a closed rational $1$-form on a projective manifold $X$ and $\mathcal F$ is invariant by a finite group $G\subset \Aut(X)$,
then the quotient of $\mathcal F$ by $G$, seen in any resolution of $X/G$, is also a transversely affine foliation. Indeed, transversely affine
structures behave rather well under rational maps between foliations.

\begin{prop}
Let $X$ and $Y$ be projective manifolds, $f:X \dashrightarrow Y$ a dominant rational map, and $\mathcal F$ a codimension one foliation on $Y$.
The foliation $f^* \mathcal F$ has a transverse affine structure if and only if so does $\mathcal F$. If this occurs, the pull-back  of any
transversely affine structure  $(\F,\nabla)$ for $\mathcal F$ has as
monodromy group a finite index subgroup of the monodromy group of $(\F,\nabla)$.
\end{prop}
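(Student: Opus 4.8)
The plan is to exploit the interpretation of transverse affine structures in terms of pairs of rational $1$-forms from Section \ref{S:interpretation}, together with the elementary fact that such structures pull back. First I would prove the statement in the forward direction: suppose $(\F,\nabla)$ is a transversely affine structure for $\mathcal F$ on $Y$, represented on an affine open set by a pair $(\omega_0,\eta_0)$ of rational $1$-forms with $d\omega_0 = \omega_0 \wedge \eta_0$ and $d\eta_0 = 0$. Since $f$ is dominant, $f^*\omega_0$ is a nonzero rational $1$-form on $X$ defining $f^*\mathcal F$ (after removing the component of its divisor contracted by $f$, but $\omega_0$ has zero locus of codimension $\geq 2$, so generically this is fine), and $f^*\eta_0$ is a closed rational $1$-form. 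Pulling back the two defining identities gives $d(f^*\omega_0) = f^*\omega_0 \wedge f^*\eta_0$ and $d(f^*\eta_0)=0$, so the pair $(f^*\omega_0, f^*\eta_0)$ defines a transverse affine structure $f^*\nabla$ for $f^*\mathcal F$. One should remark that a priori $f^*\omega_0$ may acquire or lose poles along the exceptional and contracted loci, but since everything is rational this only changes the representative $(\omega_0,\eta_0)$ in its $\mathcal O^*$-orbit (replacing $(\omega_0,\eta_0)$ by $(g\omega_0, \eta_0 - d\log g)$), and the connection $f^*\nabla$ on $\calN{f^*\mathcal F}$ is well-defined as a meromorphic flat connection killing $f^*\omega$.

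Next I would prove the converse: if $f^*\mathcal F$ admits a transverse affine structure, then so does $\mathcal F$. The idea is to push forward. Write $Y' \subset Y$ for a Zariski open over which $f$ is, say, smooth with connected fibers onto its image (shrinking as needed; here one may first Stein-factorize, but the monodromy bookkeeping is cleaner if we just restrict to a dense open where $f$ is a submersion). Given a pair $(\tilde\omega_0, \tilde\eta_0)$ representing the structure on $X$, the form $\tilde\omega_0$ defines $f^*\mathcal F$, hence is, up to a rational function, the pull-back $f^*\omega_0$ of a rational $1$-form defining $\mathcal F$; comparing $d\tilde\eta_0 = 0$ and the equation $d\tilde\omega_0 = \tilde\omega_0 \wedge \tilde\eta_0$ one finds that $\tilde\eta_0 - f^*(\text{something})$ vanishes on the fibers of $f$. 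More efficiently: $\exp(\int \tilde\eta_0)\tilde\omega_0$ is a multivalued closed $1$-form whose foliation is $f^*\mathcal F$, so it is constant on the fibers of $f$ and thus descends to a multivalued closed $1$-form on $Y'$ defining $\mathcal F$; writing this descended form as $\exp(\int\eta_0)\omega_0$ recovers a closed rational $\eta_0$ on $Y$ with $d\omega_0 = \omega_0\wedge\eta_0$, i.e. a transverse affine structure for $\mathcal F$.

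Finally, for the monodromy claim: the monodromy (anti-)representation $\varrho$ of $(\F,\nabla)$ lives on $\pi_1(Y - D)$, and the pulled-back structure has monodromy $\varrho \circ f_*$ where $f_* : \pi_1(X - f^{-1}D) \to \pi_1(Y - D)$ is the induced map (on a dense open where $f$ is a submersion with connected fibers, $f_*$ is surjective; in general its image has finite index, by a standard argument using a multisection or the fact that a dominant map of projective varieties induces a finite-index image on $\pi_1$ after removing suitable divisors). Hence the monodromy group of the pulled-back structure, being the image of $\varrho \circ f_*$, is a finite-index subgroup of the image of $\varrho$, the monodromy group of $(\F,\nabla)$. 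I expect the main obstacle to be the bookkeeping around the indeterminacy locus of $f$ and the divisors where $f^*\omega_0$ changes its polar behavior: one must check that, after the standard modification $(\omega_0,\eta_0) \mapsto (g\omega_0,\eta_0 - d\log g)$, the resulting connection on the normal bundle of $f^*\mathcal F$ is genuinely meromorphic along all of $X$ (not just over $Y'$), and symmetrically that the descended connection extends meromorphically across $Y \setminus Y'$; this extension is the only place where one really uses that we work with rational, rather than merely meromorphic, objects on projective manifolds.
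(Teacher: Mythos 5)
Your forward direction (pulling back the pair $(\omega_0,\eta_0)$) and your treatment of the monodromy claim are essentially sound and agree with the paper, which factors $f=\pi\circ h$ through its Stein factorization, notes that $h$ (irreducible generic fibers) induces a surjection on fundamental groups of Zariski open sets while the generically finite part $\pi$ induces a monomorphism with finite-index image, and, for the ``if and only if'' itself, simply cites Singer \cite{MR1062869} and \cite[Theorem 2.21]{MR2324555} rather than reproving it.

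The genuine gap is in your converse direction, and it sits exactly where the content is: the generically finite part of $f$. You propose to ``restrict to a dense open where $f$ is a submersion with connected fibers'', but no Zariski-open restriction of a dominant map whose Stein factorization has degree $d\geq 2$ has connected fibers (think of a double cover of curves: shrinking the base never disconnects the covering), so Stein factorization cannot be dismissed as bookkeeping. Your descent mechanism --- the multivalued closed $1$-form $\exp(\int\tilde\eta_0)\,\tilde\omega_0$ annihilates vectors tangent to the fibers and is closed, hence is locally a pull-back --- only descends the structure to the intermediate variety $Z$ of the Stein factorization, not to $Y$. Over a small ball $B\subset Y$ the $d$ local branches $\sigma_1,\dots,\sigma_d$ of $\pi^{-1}$ yield $d$ a priori distinct closed meromorphic $1$-forms $\eta_i=\sigma_i^*\eta_Z$ on $B$, each satisfying $d\omega_0=\omega_0\wedge\eta_i$ for a fixed rational $\omega_0$ defining $\mathcal F$, and nothing forces them to coincide; a multivalued ``$\eta_0$'' is not a transverse affine structure. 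The standard repair is a trace argument: set $\bar\eta=\frac{1}{d}\sum_i \eta_i$; averaging the $d$ equations gives $d\omega_0=\omega_0\wedge\bar\eta$, the form $\bar\eta$ is still closed, and it is symmetric in the branches, hence a well-defined rational $1$-form on $Y$. Without this step (or the citation the paper uses) the converse is not proved; note that this is also precisely the point responsible for the index of the monodromy subgroup being possibly greater than one, so the disconnected-fiber case cannot be sidestepped in either half of the statement.
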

\begin{proof}
The result, phrased in terms of extensions of differential fields, is already implicit in \cite{MR1062869}. Except for the  finiteness of the index, a geometric
proof can be found in  \cite[Theorem 2.21]{MR2324555}.
We can write $f=\pi\circ h$ with $\pi: Z \dashrightarrow Y$ a generically finite rational
 map from a projective manifold $Z$ to $Y$, and $h:X \dashrightarrow Z$ a rational map with irreducible generic fiber.
The monodromy of the pull-back of the affine structure $(\mathcal F,\nabla)$ factors through $h$, and $h$ induces a surjective map of   fundamental groups, after restriction to any nonempty Zariski open set.
On the other hand, if we restrict $\pi$ over a sufficiently small nonempty Zariski open set, it induces a monomorphism with finite index image between fundamental groups.
\end{proof}

\begin{example}[Quotients]
Let $\mathcal F$ be a codimension one foliation on a projective manifold $X$ defined by a closed rational $1$-form $\omega_0$
and which does not admit a rational first integral.  If $\varphi \in \Aut(X)$ is an automorphism of finite order of $\mathcal F$ then
$\varphi^*(\omega_0)$  is also a  closed $1$-form  defining $\mathcal F$ and since $\mathcal F$ does not admit a rational first integral,
we must have $\varphi^* \omega_0 = \xi \omega_0$ for some root of unity $\xi$. The  quotient of $\mathcal F$ by $\varphi$ is a transversely affine foliation with
monodromy group equal to an extension of
the subgroup  of $\mathbb C^*$ generated by $\xi$
by a
subgroup of $(\mathbb C,+)$, determined by the integrals of $\omega_0$
along paths joining points in the same orbit of $\varphi$.
\end{example}

\begin{example}[Riccati foliations]
Let $X$ be a  projective manifold and $\pi:X \to Y$ a fibration with generic fiber isomorphic to $\mathbb P^1$. If $\mathcal F$ is a codimension one foliation on $X$
which has no tangencies with the general fiber of $\pi$ then we say that $\mathcal F$ is a Riccati foliation. An arbitrary  Riccati foliation
does not admit a transverse affine structure. Indeed, it follows from a classical result of Liouville   that a Riccati foliation admits a transverse
affine structure if and only if there exists a hypersurface $H\subset X$, invariant by $\mathcal F$ and which dominates $Y$, i.e. with $\pi(H) = Y$.
For example, if $H$ intersects the general fiber at only one point then there exists a birational transformation $\varphi : Y \times \mathbb P^1 \dashrightarrow X$
such that the strict transform of $H$ is the section at infinity of $Y \times \mathbb P^1 \to Y$. On $Y\times \mathbb P^1$ the foliation $\varphi^* \mathcal F$
is defined by a rational $1$-form $\omega_0=dy+ \alpha + y \beta$, with $\alpha, \beta$ pull-backs of rational $1$-forms on $Y$. Since $\omega_0$ is integrable it follows that $d\alpha=\alpha\wedge\beta$ and $\beta$ is closed.
If we take  $\eta_0=\beta$ then $d\omega_0=\omega_0\wedge \eta_0$ which shows that $\varphi^* \mathcal F$ is a transversely affine foliation.

It follows from the Riemann-Hilbert correspondence that there are no restrictions on the monodromy group of these Riccati foliations, see \cite{MR2337401}. In particular any finitely generated subgroup of $\Aff$ appears as
the monodromy group of a transversely affine Riccati foliation over $Y=\Pu$.
\end{example}

Notice that there exist transversely affine Riccati foliations with trivial monodromy but not given by a closed rational $1$-form, e.g. Example $\ref{llibre}$.
Similarly, there are Riccati foliations with trivial monodromy   which are not transversely affine foliations.

\subsection{Holonomy}
For a transversely affine codimension one foliation $(\F,\nabla)$ with singular divisor $D$, we define the {\bf singular leaves} of $\mathcal F$ as the leaves contained in $D$.
 Every other leaf of $\mathcal F$ will be called  a {\bf non singular leaf}.

\begin{prop}
The holonomy of any   non singular leaf $L$ of $\mathcal F$ is linearizable.
\end{prop}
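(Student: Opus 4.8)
The plan is to work locally along a non singular leaf $L$ and produce a closed holomorphic $1$-form transverse to $\mathcal{F}$ near $L$ whose primitive linearizes the holonomy. First I would pick a base point $q \in L$ and a small transversal $\Sigma \cong (\mathbb{C},0)$ through $q$. Since $L$ is not contained in $D$, a loop $\gamma$ in $L$ based at $q$ can be homotoped (within $L$, rel endpoints) to avoid the polar locus of any chosen local pair $(\omega_0,\eta_0)$ representing $(\mathcal{F},\nabla)$, except that the leaf itself may pass through $D$; the key point is that along a path in $L$ meeting $D$ only at isolated points we can switch to other representative pairs $(\omega_0',\eta_0')$ regular there (as explained in Section~\ref{S:interpretation}). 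Thus the developing map $\exp(\int \eta_0)\,\omega_0$, whose local primitive is a holomorphic first integral $F$ on a neighborhood of a point of $L \setminus D$, continues along $L$ and the monodromy computation of Section~\ref{S:interpretation} applies.

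The second step is the actual linearization argument. The holonomy $h_\gamma : (\Sigma, q) \to (\Sigma, q)$ of a loop $\gamma \subset L$ is conjugate to the affine transformation $\varrho(\gamma)$ acting on the target of the developing map, via the local first integral $F$; this is the content of saying $F$ is a first integral transversely modeling $\mathcal{F}$ by the affine action. Concretely, choosing a leafwise coordinate realization, $h_\gamma = F^{-1} \circ \varrho(\gamma) \circ F$ on $\Sigma$, where $\varrho(\gamma)(z) = a_\gamma z + b_\gamma$ with $a_\gamma = \exp(\int_\gamma \eta_0)$. Therefore the holonomy representation $\pi_1(L,q) \to \mathrm{Diff}(\Sigma,q)$ is simultaneously conjugated, by the single germ $F$, into a subgroup of $\Aff$ acting on the $z$-coordinate. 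A group of germs that is conjugate (by one germ) to a group of affine maps is by definition linearizable: if the linear parts $a_\gamma$ are not all trivial one conjugates $z \mapsto a_\gamma z + b_\gamma$ to $z \mapsto a_\gamma z$ by the affine change fixing the common fixed point; if all $a_\gamma = 1$ the group is already linear (a group of translations). Either way the holonomy, being conjugate to a subgroup of $\Aff$ which is itself linearizable, is linearizable.

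I would also address the degenerate possibility that $F$ is not a submersion at $q$, i.e. the developing map has a critical point on the transversal — this is permitted in the singular setting. One handles it by noting that $\varrho$ still describes the transverse action and the holonomy pseudogroup is generated by the $h_\gamma$; the identity $h_\gamma = F^{-1}\circ \varrho(\gamma)\circ F$ holds as an identity of (possibly ramified, but invertible as germs of biholomorphisms of $\Sigma$, since holonomy is always a group of biholomorphism germs) maps, so the conjugacy persists. Alternatively, one can replace $\Sigma$ by a suitable branched cover or simply choose the transversal at a point where the developing map is a local submersion along $L$ (a generic point of $L$), since linearizability of holonomy is independent of the chosen base point on the leaf.

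The main obstacle I anticipate is the bookkeeping around the divisor $D$: making precise that, although the global representatives $\omega_0, \eta_0$ may blow up at points of $L \cap (\text{polar locus away from } D)$ and even though $L$ may genuinely meet $D$, one can still build a \emph{single} germ $F$ at $q$ that conjugates the whole holonomy group into $\Aff$. The resolution is exactly the patching argument sketched in Section~\ref{S:interpretation} — the developing map is globally well defined on the universal cover of $X \setminus D$ regardless of the extra poles of $\omega_0$ — combined with the observation that a non singular leaf $L$, while it may accumulate on or touch $D$, carries a well-defined restricted developing map because its holonomy pseudogroup is determined away from $D$ and extends. Granting this, the linearization is essentially formal.
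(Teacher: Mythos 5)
Your argument is correct and is essentially the paper's own (three-line) proof: at a base point $q\in L$ away from $D\cup\sing\F$ the local first integral $F=\int\exp(\int\eta_0)\,\omega_0$ is a submersion and conjugates the holonomy of $L$ into the subgroup of $\Aff$ fixing the point $F(q)$, which is simultaneously linearized by the translation $z\mapsto z-F(q)$ (and your worries about $L$ meeting $D$ are moot, since the components of $D$ are $\F$-invariant, so a non singular leaf is disjoint from $D$ outside $\sing\F$). One small slip: in the case ``all $a_\gamma=1$'' the group is \emph{not} ``already linear'' --- a nontrivial translation is not linearizable --- but this is harmless because every $\varrho(\gamma)$ fixes $F(q)$, so any translation occurring in the holonomy is the identity.
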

\begin{proof}
Let $U=X-(D\cup\sing\F)$. Notice that the holonomy of any leaf of $\F_{\vert U}$ is in $\mathrm{Aff}(\C)$ and fixes a point in $\C$. Therefore
it is linearizable.
Since any element of $\pi_1(L)$ can be  represented by a loop in $L\cap U$, it follows that the same holds true for the  holonomy of $L$ .
\end{proof}

The determination of the holonomy of the singular leaves of $\mathcal F$ is more subtle and we will not treat the general case. 
For our purposes the statement below suffices.

\begin{prop}\label{P:holonomy}
Let $L$ be a smooth and irreducible component of $D$. Then the holonomy of $L$ is solvable. Moreover, 
if the singularity of $\nabla$ along $L$ is not logarithmic then the holonomy group of $L$ has a finite index subgroup  tangent to the identity,  and is thus virtually abelian.
\end{prop}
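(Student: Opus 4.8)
The plan is to work in a neighborhood of a general point of $L$ and translate the problem into one about germs of $1$-forms, then propagate the local conclusion around $L$ using that the holonomy group is generated by the holonomy of loops that stay near $L$. Since $L$ is a smooth irreducible component of $D$ that is $\mathcal F$-invariant (Proposition on invariance of $D$), near a general point $p \in L$ I can choose a local pair $(\omega_0, \eta_0)$ representing $(\mathcal F, \nabla)$ with $L = \{f = 0\}$; the argument in the proof of the invariance proposition gives $\eta_0 = \alpha + h(f)\tfrac{df}{f^k}$ with $\alpha$ holomorphic, $h$ holomorphic with $h(0) \ne 0$, and $k \ge 1$ the polar order, $k = 1$ precisely in the logarithmic case. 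The holonomy of $L$ is computed by transporting along a transversal $\{f = t\}$; the transverse affine structure says this holonomy lies in $\mathrm{Aff}(\mathbb C)$, and solvability of $\mathrm{Aff}(\mathbb C)$ gives solvability of the holonomy group for free. So the content is entirely in the ``moreover''.

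For the non-logarithmic case ($k \ge 2$), the key step is to analyze the \emph{linear part} $\rho$ of the holonomy, which is the monodromy of $\nabla$ restricted near $L$, i.e. determined by $\exp(\int \eta_0)$. A small loop $\gamma$ around $L$ contributes a factor $\exp(2i\pi \Res_L(\nabla))$; but more importantly, a loop $\gamma$ inside $L$ (avoiding $\sing \mathcal F \cap L$ and the other components of $D$) has linear holonomy $z \mapsto z \cdot \exp(\int_\gamma \eta_0)$, and since the restriction of $\eta_0$ to a transversal is, up to a holomorphic closed form $\alpha$, of the form $h(f)\tfrac{df}{f^k}$ — a form whose periods along loops in $L$ come only from the holomorphic part $\alpha$ — one sees the linear holonomy of such loops, viewed as the derivative at the fixed point $0$ of the transversal, is $\exp(\int_\gamma \alpha)$, hence multiplicative in $\gamma$ and abelian. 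The point is then that loops in $L$ generate a finite-index subgroup of $\pi_1(L \setminus (\text{stuff}))$ mapping onto a finite-index subgroup of the holonomy group, and on this subgroup the linear part factors through an abelian group; one must show this subgroup is actually tangent to the identity. For this I would invoke the semi-local structure of the foliation near the non-logarithmic component: the normal form of the connection along $L$ (a pole of order $k \ge 2$) forces the transverse monodromy representation of the full affine holonomy to have trivial linear part on a finite-index subgroup — concretely, the holonomy maps are of the form $z \mapsto z + (\text{higher order})$ on the relevant subgroup because a genuine order-$k$ pole produces unipotent-type transverse dynamics (this is the classical picture of ``Fuchs vs. irregular'' behavior transplanted to the foliated setting). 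A subgroup of $\mathrm{Aff}(\mathbb C)$ with trivial linear parts is contained in $(\mathbb C, +)$, hence abelian, so the holonomy group is virtually abelian.

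I expect the main obstacle to be making precise the claim that in the non-logarithmic case a finite-index subgroup of the holonomy is tangent to the identity, rather than merely having abelian linearization. Having abelian \emph{linear part} is cheap (it follows from closedness of $\eta_0$ and the residue formula, since the map $\gamma \mapsto \exp(\int_\gamma \eta_0)$ is a homomorphism $\pi_1 \to \mathbb C^*$), but ruling out a nontrivial abelian \emph{linear} holonomy and showing the relevant subgroup sits inside $(\mathbb C,+)$ requires genuinely using $k \ge 2$, presumably via a local normal form for $\nabla$ near a general point of $L$ (a connection with a pole of order $k \ge 2$ on a line bundle over a disc is, after gauge, $d + (\tfrac{c}{f^k} + \text{lower})\,df$ with $c \ne 0$), and then a monodromy computation for the associated affine equation $\exp(\int\eta_0)\omega_0$ whose developing map near $L$ has an essential singularity — this is where one shows the affine holonomy of loops encircling $L$ and loops in $L$ together generate a group whose linear part is trivial on a finite-index subgroup. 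I would also need to handle $\sing \mathcal F \cap L$ carefully: remove it, work on $L$ minus finitely many points, and check that the resulting fundamental group surjection onto the holonomy group is compatible with passing to a finite-index subgroup.
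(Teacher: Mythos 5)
Your plan has a genuine gap, and it sits exactly where you suspect it does. The step you defer --- showing that a non-logarithmic pole of $\nabla$ along $L$ forces the holonomy germs to be tangent to the identity up to finite index --- is the entire content of the ``moreover'', and the heuristic you offer for it (``a genuine order-$k$ pole produces unipotent-type transverse dynamics'') is not an argument. The paper proves it by an explicit computation that your proposal never reaches: at a smooth point $q$ of $L$ one normalizes $\omega = dy$ and $\eta = \lambda\,\tfrac{dy}{y} + d(1/a(y))$ with $a(0)=0$ of order $k$, writes the developing map on a sector of a transversal as $f(y)=\int_\star^y s^\lambda \exp(1/a(s))\,ds$, uses the semi-conjugacy $f(y)=\alpha f(h(y))+\beta$ for a holonomy germ $h$, and differentiates: constancy of $\alpha$ together with the moderate growth of $1/h'(y)$ and $(y/h(y))^\lambda$ forces $\tfrac{1}{a(y)}-\tfrac{1}{a(h(y))}$ to be holomorphic, hence $h'(0)^k=1$. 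That is the mechanism by which $k\ge 1$ (i.e.\ non-logarithmicity) is ``genuinely used'', and nothing in your outline substitutes for it.

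There is also a framework error that undermines both halves of your argument. The holonomy of a leaf $L$ contained in $D$ is a subgroup of $\Diff(\mathbb C,0)$, \emph{not} of $\Aff$: the affine structure lives only on $X-D$, so the developing map restricted to a transversal is singular (essentially singular, in the non-logarithmic case) at the point $\Sigma\cap L$, and the holonomy is only semi-conjugated into $\Aff$ by this singular map. Hence solvability is not ``for free'' from solvability of $\Aff$ (the paper cites Paul and Belliart--Liousse--Loray for it); and if the holonomy really did lie in $\Aff$ and fix a point it would be linearizable, so ``finite-index subgroup tangent to the identity'' would mean the holonomy is finite --- a much stronger statement than the one being proved, which should tell you the $\Aff$ picture cannot be right. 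Relatedly, you conflate the derivative $h'(0)$ of a holonomy germ (the ``linear part'' in the statement) with the multiplicative part $\exp(\int_\gamma\eta_0)$ of the monodromy of $\nabla$; the paper's displayed formula for $\alpha$ is precisely the (nontrivial) relation between the two. Finally, the last step needs the fact that solvable subgroups of $\Diff(\mathbb C,0)$ with trivial linear part are abelian, not a statement about subgroups of $\Aff$; and loops ``encircling $L$'' play no role, since the holonomy of $L$ is a representation of the fundamental group of the leaf itself.
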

\begin{proof}
The fact that the  holonomy group  $L$ is a  solvable subgroup of $\Diff(\mathbb C,0)$ is well-known, see for instance \cite{MR1711295} or \cite{MR1869061}.
To prove the statement about non-logarithmic singularities of $\nabla$ we adapt the arguments of  \cite[pages 3076-3077]{MR1390971}.

Let $q \in L$ be a point where the foliation is smooth. In suitable local analytic coordinates at a neighborhood of $q$ the foliation $\mathcal F$ is defined by a $1$-form
$\omega=dy$ and the connection form is $\eta = \lambda \frac{dy}{y} + d(1/a(y))$ with $a(0)=0$.
Let $\Sigma$ be a transversal to $\mathcal F$ at $q$ with coordinate $y$.
The restriction of the (multi-valued) first integral $\int \exp(\int\eta) \omega$ to a sector with vertex at $q$ on $\Sigma$
is (one of the determinations of) $f(y)= \int_\star^y s^\lambda \exp(1/a(s)) ds$. If $h : (\Sigma,q) \to (\Sigma,q)$ is a holonomy map then
\[
f(y) = \alpha f(h(y)) + \beta
\]
for suitable $\alpha \in \mathbb C^*$ and $\beta \in \mathbb C$. After differentiating we can express $\alpha$ as
\[
\alpha = \frac{f'(y)}{f'(h(y))h'(y)} =
\frac{1}{h'(y)} \cdot \left(\frac{y}{h(y)}\right)^{\lambda} \cdot \exp\left( \frac{1}{a(y)} -\frac{1}{a(h(y))} \right)  \, .
\]
Since $\alpha$ is constant, the parameter in the exponential must be holomorphic as all the other factors in the product
have moderate growth. This implies $h'(0)^k= 1$, where $k$ is the vanishing order of $a(y)$ at $y=0$.
Thus the linear part of the members of $H$ are all roots of unity of order $\leq k$ and form a finite subgroup of $\mathbb C^*$. Therefore $H$ admits a finite index subgroup consisting of
germs of diffeomorphisms  tangent to the identity.  But solvable subgroups of $\Diff(\mathbb C,0)$ with trivial linear part are abelian, \cite[pages 3-4]{MR1692471}, and we conclude that
the holonomy of $L$ is virtually abelian.
\end{proof}

\subsection{Transversely affine foliations as transversely projective foliations}
A codimension one foliation $\mathcal F$ on a projective manifold $X$ is a {\bf
singular transversely projective foliation}  if there exists
\begin{enumerate}
\item  $\pi: P \to X$ a $\mathbb P^1$-bundle over $X$ locally trivial in the Euclidean topology;
\item $\mathcal H$ a codimension one singular holomorphic foliation
of $P$ transverse to
the generic fiber of  $\pi$;
\item $\sigma: X \dashrightarrow P$ a rational
section generically transverse to
$\mathcal H$;
\end{enumerate}
such that $\mathcal F = \sigma^* \mathcal H$.
The triple $\mathcal P= (P,\mathcal H,\sigma)$ is, by definition, a {\bf transverse projective structure} for $\mathcal F$.
This definition of singular transversely projective foliation is essentially equivalent to the one given in \cite{MR1432053},
for a comparison between the two definitions and thorough discussion see \cite{MR2337401}. As in the case of singular transverse affine structures/foliations we will deliberately
omit the adjective singular, and refer to this class of foliations from now on as transversely projective foliations.

Any two such triples $\mathcal P=(P,\mathcal H,\sigma)$ and $\mathcal P'=(P',\mathcal H',\sigma')$ are said {\bf birationally equivalent}
when they are conjugate by a birational bundle transformation
$\phi: P \dashrightarrow P'$ satisfying
$\phi^* \mathcal H'= \mathcal H$, and $\phi\circ \sigma= \sigma'$.

The {\bf polar divisor} of the transverse structure, denoted by $(\mathcal P)_{\infty}$, is the
divisor on $X$ defined by the direct image under $\pi$ of the
tangency divisor between $\mathcal H$ and the one-dimensional foliation
induced by the fibers of $\pi$.

Let $\vert (\mathcal P)_{\infty} \vert$ be the support of the polar divisor.
The {\bf monodromy representation} of  a projective structure $\mathcal{P}=(P\to X,\mathcal{H}, \sigma)$ is the (anti-)representation of
$\pi_1(X \setminus \vert (\mathcal P)_{\infty} \vert)$ into
$\mathrm{PSL}(2,\mathbb C)$ obtained by lifting paths on $X
\setminus \vert (\mathcal P)_{\infty} \vert$ to the leaves of
$\mathcal H$. Notice that the monodromy representation does not depend on $\sigma$.

Over a sufficiently small open subset $U$ of  $X$, the foliation $\mathcal H$ is the projectivization of a foliation on  a rank two vector bundle over
$U$ given by the flat sections of a meromorphic flat $\mathfrak{sl}(2)$-connection. We say that a transverse projective structure $(P,\mathcal H,\sigma)$ has {\bf regular singularities} when  the corresponding flat meromorphic connection is regular in the sense of \cite[Chapter II]{MR0417174}.

\begin{lemma}\label{L:obvious}
If we have two transverse projective structures $(P,\mathcal H,\sigma)$ and $(P',\mathcal H',\sigma')$ on $X$, both
having regular singularities then they have conjugate  monodromies if and only if there exists a birational bundle map $\phi: P \dashrightarrow P'$ such that $\phi^*\mathcal H' = \mathcal H$.
\end{lemma}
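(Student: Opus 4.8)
The plan is to prove the two implications of Lemma \ref{L:obvious} separately, the forward direction being the substantial one.

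\textbf{The easy direction.} Suppose there is a birational bundle map $\phi: P \dashrightarrow P'$ with $\phi^*\mathcal H' = \mathcal H$. Then $\phi$ conjugates the one-dimensional foliation by fibers of $P$ to that of $P'$ (since it is a bundle map over $\mathrm{id}_X$) and carries $\mathcal H$ to $\mathcal H'$. Over the complement of the (finitely many) curves in $X$ above which $\phi$ fails to be a biregular isomorphism of $\mathbb P^1$-bundles, lifting a loop $\gamma$ to a leaf of $\mathcal H$ and then applying $\phi$ gives the lift of $\gamma$ to a leaf of $\mathcal H'$; hence the two monodromy anti-representations agree on $\pi_1$ of that Zariski-open set, and since $\pi_1$ surjects onto $\pi_1(X\setminus|(\mathcal P')_\infty|)$ when we remove further curves, the monodromies of $\mathcal H$ and $\mathcal H'$ are conjugate (the conjugating element coming from comparing $\phi$ on the chosen fibers over the base points).

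\textbf{The hard direction.} Now assume the monodromy anti-representations $\varrho, \varrho': \pi_1(X\setminus Z) \to \PSL$ are conjugate, where $Z$ contains both polar divisors; after conjugating $\mathcal H'$ by a constant element of $\PSL$ we may assume $\varrho = \varrho'$. The strategy is to build $\phi$ over $X\setminus Z$ first and then extend. Over the universal cover $\widetilde{X\setminus Z}$, both $\mathcal H$ and $\mathcal H'$ have developing maps — holomorphic submersions to $\mathbb P^1$ (away from the pullback of the polar loci) equivariant for $\varrho$ — and composing a local section of the developing map of $\mathcal H$ with the developing map of $\mathcal H'$ produces a $\varrho$-equivariant, hence well-defined on $X\setminus Z$, biholomorphism $\phi_0$ between the associated flat $\mathbb P^1$-bundles intertwining $\mathcal H$ and $\mathcal H'$. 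Concretely: each projective structure, being flat, is locally the projectivization of flat sections of an $\mathfrak{sl}(2)$-connection, and the locally-defined ratio of a fundamental solution matrix for $\nabla$ against one for $\nabla'$ glues, using equality of monodromy, to a global birational bundle transformation over $X\setminus Z$ conjugating the foliations. The point where the \emph{regular singularities} hypothesis is essential is the extension of $\phi_0$ across $Z$: I would invoke the fact that for connections with regular singularities, horizontal sections have moderate (polynomial) growth near the polar divisor, so $\phi_0$, expressed locally as a ratio of such sections, is meromorphic along $Z$ rather than possessing essential singularities; hence $\phi_0$ extends to a birational map $\phi: P \dashrightarrow P'$ of the compactified bundles, and $\phi^*\mathcal H' = \mathcal H$ holds on a dense open set and therefore everywhere.

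\textbf{Main obstacle.} The delicate step is precisely this last extension across the polar divisor: without the regularity assumption the gluing map could acquire essential singularities (irregular Stokes phenomena), so that no meromorphic — let alone birational — extension exists. Under regular singularities one reduces to a local statement near a smooth point of $Z$, where the connection is gauge-equivalent to one with a simple pole and the solutions are of the form (holomorphic invertible matrix)$\cdot x^{A}$; the ratio of two such, with the \emph{same} monodromy $e^{2\pi i A}$, differs from a single-valued meromorphic matrix by the ambiguity of the logarithm, and after the standard normalization of $A$ one checks the ambiguity is killed, leaving a meromorphic (hence, on $\mathbb P^1$-bundles, birational) transformation. One must also check $\phi$ is genuinely a bundle map and not merely a birational self-map of $P'$ — this follows because $\phi_0$ was built fiberwise over $X\setminus Z$ — and that it respects the sections $\sigma, \sigma'$ is \emph{not} required here, consistent with the remark that the monodromy does not depend on the section.
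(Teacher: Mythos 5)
Your proof is correct and follows essentially the same route as the paper: build the bundle isomorphism over the complement of the (common) polar locus from the coincidence of monodromies, extend it meromorphically across smooth points of the polar divisor using the regularity hypothesis, and then across codimension two (the paper invokes Levi extension here, which you leave implicit). The only real difference is in the local extension step, and it is cosmetic: where you appeal to moderate growth of solutions of regular-singular connections \`a la Deligne to see that the gluing map has no essential singularity, the paper reduces to the explicit Riccati normal forms of \cite{MR2337401} and checks directly that the Moebius coefficients are meromorphic at the divisor.
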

\begin{proof}
To compare the monodromies, we have to consider structures with the same polar locus $D$; to be in that situation we may take $D$ the union of both original polar loci. Once this is done,
suppose these monodromies are the same (or rather conjugated).

Over $X\setminus D$, there exists a $\Pu$-bundle isomorphism $\psi$ such that $\psi^*\mathcal H'_{\vert X \setminus D}=\mathcal H_{\vert X \setminus D}$, since both foliations are defined over $X\setminus D$ by the suspension
of the corresponding representations.
We want to show $\psi$ extends to $\phi$, a bimeromophic  $\Pu$-bundle map, defined on the whole of $X$. We first show that $\psi$ extends meromorphically in the neighborhood of any smooth point $q$ of $D$. Let $U$ be a sufficiently small  neighborhood of $q$ and $f$ a local equation in $U$ for the irreducible component $C$ of  $D$ through $q$.
We take $\mathfrak{sl}(2)$-connections $(E,\nabla)$, $(E',\nabla')$ which are local lifts for $(P,\mathcal H)$, $(P',\mathcal H')$ as in \cite[Section 2.1]{MR2337401}.

By regularity of the connections, and up to bimeromorphic transformations of $E_{\vert U}$, $E'_{\vert U}$ which are biholomorphic outside of  $E_{| C}$,  we can suppose that
both vector bundles are trivial and that the connections have at worst logarithmic poles. Moreover, by coincidence of monodromy, we can assume that the spectra $\{\theta/2,-\theta/2\}$, $\{\theta'/2,-\theta'/2\}$ of the residues matrices at $C$ are the same.  According to \cite[Remark 4.9]{MR2337401}, in suitable coordinates both $\mathcal H$ and $\mathcal H'$ can be defined by
\[
dz- \theta z\frac{df}{f} \quad  \text{ or }  \quad dz-(n z+ f^n)\frac{df}{f},
 \]
where $n=\theta \in \mathbb N$ in the second case.

Therefore  $\psi_{\vert U\setminus D}$ can be interpreted as  a symmetry $(x,\tilde{z}) \mapsto (x,A(f,z))$ of  $\mathcal H_{\vert U \setminus D}$
where $A(t,z)=(a(t)z + b(t))(c(t)z+d(t))^{-1}$ is a Moebius transformation with coefficients $a,b,c,d$
holomorphic in a punctured neighborhood of $0 \in \mathbb C$. In any case, plugging the coordinate
functions of $A$ in the equations for the symmetry of $\mathcal H_{\vert U \setminus D}$ shows that $A$ is meromomorphic at $f=0$.
Since $q \in D$  is an arbitrary smooth point, it follows that we can meromorphically extend $\psi$ to the complement of a codimension two subset of $P$.
 Levi extension theorem allow us to meromorphically extend $\psi$ to the whole $X$, and obtain the sought birational bundle map $\phi$.
 \end{proof}

\medskip

Every transversely affine codimension one foliation $(\mathcal F, \nabla)$ on a projective manifold $X$ carries a natural transverse projective structure
$\mathcal P_{\nabla}$.
It is given by $(P,\mathcal H,\sigma)$ as follows.
\begin{itemize}
\item $P=\mathbb P(E)$ is the family of lines in $E$  where $E=\calN\F \oplus \mathcal O_X$;
\item $\sigma: X \to P$ the section  corresponding to the inclusion $\mathcal O_X  \to \calN\F \oplus \mathcal O_X$;
\item $\mathcal H$ the foliation on $P$ defined by projectivization of a flat connection $\D$ on $E$; its leaves are the projections of horizontal sections of $\D$.
We now describe $\D$. We have a flat connection $\fnabla$ on $E$ given by   $ \fnabla=\nabla \oplus d$, we also have a map
$i:\calN\F\otimes \Omega^1_X \rightarrow \mathrm{End}(E)\otimes \Omega^1_X$ induced by the composition of natural maps $$\calN\F\simeq \Hom(\mathcal O_X, \calN\F)\hookrightarrow \mathrm{End}(E);$$
let $\omega\in \mathrm{H}^0(X,\calN\F\otimes\Omega_X^1)$ be a section defining $\F$, we define $\D$ to be the translated of $\fnabla$ by $i(\omega)$: $\D=\fnabla+i(\omega)$. It is easily checked that $\D$ is flat.
\end{itemize}
If we perform the construction of $\D$ starting with another section $\lambda \omega$, $\lambda \in \C^*$, then we obtain $\D'$, the transform of $\D$ by the automorphism $\lambda \oplus1$ of $E$. So that the isomorphism class of $\D$ is canonically defined by the  transverse affine structure of $\F$. %, even if $X$ is not compact.

If we use a  trivialization coordinate $z :\calN\F_{\vert U}\rightarrow \C$ on $\calN\F_{\vert U}$ and  $z\oplus id$ on $E_{\vert U}$,
we see that $\D=d+\Omega$ has connection matrix $\Omega=\left [ \begin{smallmatrix} \eta & \tilde{\omega} \\ 0&0 \end{smallmatrix} \right ]$, where
$\tilde{\omega}$ and $\eta$ represent respectively $\omega$ and  the connection matrix of $\nabla$ in the trivializations.

 In such a trivialization, $\mathcal H$ coincides  with the foliation defined by
the meromorphic $1$-form
\[
dz + \tilde{\omega}+ z \eta
\]
on the open subset $U\times \mathbb{P}(\calN\F_{\vert U}\oplus 1)$.
Over $U$, the section $\sigma$ is $z=0$.

\begin{prop}\label{P:pullRiccati}
Let $\mathcal F$ be a transversely projective codimension one foliation on a projective manifold $X$ with transverse projective structure $\mathcal P=(P,\mathcal H, \sigma)$. Suppose there exists a fibration $f: X \to C$ with connected fibers such that the (local) meromorphic flat connection defining $\mathcal H$ has regular singularities along the general  fiber of $f$
and that
the monodromy representation $\rho$ of $\mathcal P$ factors through $f$, i.e.  there exists a divisor $F$ supported on finitely many  fibers of $f$ and a representation $\rho_0$ from the fundamental group of $C_0= f(X -  |(\mathcal P)_{\infty} + F|)$ to $\mathrm{PSL}(2,\mathbb C)$ fitting in the diagram below.
\begin{center}
\begin{tikzpicture}
  \matrix (m) [matrix of math nodes,row sep=2em,column sep=4em,minimum width=2em]
  {
    \pi_1(X - |(\mathcal P)_{\infty} + F| ) & \PSL  \\
      \pi_1(C_0 ) & \\};
  \path[-stealth]
    (m-1-1) edge node [above] {$\rho$} (m-1-2)
            edge node [left] {$f_*$} (m-2-1)
    (m-2-1)  edge node [below] {$\rho_0$} (m-1-2)  ;
\end{tikzpicture}
\end{center}
Then there exists a $\mathbb P^1$-bundle $S$ over $C$;  a Riccati foliation
$\mathcal R$ on $S$; and a rational map $p: X \dashrightarrow S$ such that $p^* \mathcal R = \mathcal F$.
\end{prop}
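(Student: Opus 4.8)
The plan is to realize the factored representation $\rho_0$ by a Riccati foliation over $C$ via the Riemann--Hilbert correspondence, pull it back to $X$, and identify the pull-back with $\mathcal P$ by comparing transverse projective structures as in Lemma \ref{L:obvious}. Since $C$ is a curve, the Riemann--Hilbert correspondence (as recalled in the discussion of Riccati foliations above, cf.\ \cite{MR2337401}) produces a $\mathbb P^1$-bundle $\pi_S\colon S\to C$ and a Riccati foliation $\mathcal R$ on $S$, with at worst logarithmic --- hence regular --- singularities, whose monodromy representation is $\rho_0$; its polar divisor is carried by the fibers of $\pi_S$ over a finite set $T\subset C$. Set $\tilde P:=X\times_C S$, with projections $\tilde\pi\colon \tilde P\to X$ and $q\colon \tilde P\to S$, and $\tilde{\mathcal H}:=q^*\mathcal R$. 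As $\mathcal R$ is transverse to the general fiber of $\pi_S$, the foliation $\tilde{\mathcal H}$ is transverse to the general fiber of $\tilde\pi$, so $(\tilde P,\tilde{\mathcal H},\tilde\sigma)$ is a transverse projective structure on $X$ for any rational section $\tilde\sigma$; being pulled back from a logarithmic connection on the curve $C$, the local connection defining it has regular singularities everywhere, its polar divisor is supported on $f^{-1}(T)$, and its monodromy representation is $\rho_0\circ f_*=\rho$, the monodromy of $\mathcal P$.

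Next I would compare $\mathcal P$ with $(\tilde P,\tilde{\mathcal H},\tilde\sigma)$ over the preimage of a cofinite subset of $C$. Let $\Delta\subset C$ be the finite set made of $T$, of $C\setminus C_0$, of $f(|F|)$, of the images under $f$ of the components of $|(\mathcal P)_\infty|$ contracted by $f$, and of the finitely many points over which the fiber of $f$ is not general for $\mathcal P$ and $\mathcal F$; put $U:=f^{-1}(C\setminus\Delta)$. Over $U$ the foliation $\tilde{\mathcal H}$ has empty polar divisor, while $D:=|(\mathcal P)_\infty|\cap U$ is a union of components dominating $C$, and $\mathcal H$ has regular singularities along each of them: for $t\notin\Delta$ and a general curve $\gamma\subset X_t$ meeting $D$ transversely, the restriction of the local connection to $\gamma$ is regular because its restriction to $X_t$ is so by hypothesis, and thus the claim follows from the characterization of regular singularities by restriction to curves \cite[Chapter II]{MR0417174}. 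Since $U\setminus D\subset X-|(\mathcal P)_\infty+F|$, both $\mathcal H$ and $\tilde{\mathcal H}$ induce on $\pi_1(U\setminus D)$ the representation $\rho_0\circ f_*$. I would then run the argument of Lemma \ref{L:obvious} over $U$: trivialize both bundles over $U\setminus D$ by the suspension of this representation; adjust the residues along $D$ --- which lie in $\frac12\mathbb Z$, the local monodromy along a component dominating $C$ being trivial because its loops are $f$-contractible --- by elementary transformations so that the two local normal forms coincide; and extend the resulting fiberwise Moebius transformation meromorphically across $D$ by Levi's theorem. This yields a birational bundle map $\phi\colon P|_U\dashrightarrow \tilde P|_U$ over $U$ with $\phi^*\tilde{\mathcal H}=\mathcal H$.

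Finally, set $p:=q\circ\phi\circ\sigma\colon X\dashrightarrow S$. It lifts $f$ since $\pi_S\circ q=f\circ\tilde\pi$ and $\tilde\pi\circ\phi\circ\sigma=\mathrm{id}$, and it is dominant: $\phi\circ\sigma$ is a rational section of $\tilde P\to X$, generically transverse to $\tilde{\mathcal H}$ because $\sigma$ is generically transverse to $\mathcal H=\phi^*\tilde{\mathcal H}$, so over a general fiber $X_t$ it is not a leaf of the restriction of $\tilde{\mathcal H}$ to $\tilde P|_{X_t}=X_t\times S_t$, which is the pull-back of the point-foliation of $S_t$ under the projection $X_t\times S_t\to S_t$, and hence it maps $X_t$ dominantly onto $S_t\cong\mathbb P^1$. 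Then $p^*\mathcal R=\sigma^*\phi^*q^*\mathcal R=\sigma^*\phi^*\tilde{\mathcal H}=\sigma^*\mathcal H=\mathcal F$ over $U$, and so on all of $X$. The step I expect to be the main obstacle is the transfer of regularity: the hypothesis only controls the local connection along the general fiber of $f$, and one must turn this into genuine regularity along the horizontal components of $(\mathcal P)_\infty$ before the comparison of Lemma \ref{L:obvious} can be applied; the vertical polar components, in contrast, cost nothing, being simply absorbed into $\Delta$.
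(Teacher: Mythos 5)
Your proof is correct, but it takes a genuinely different route from the paper's. You build the target surface first, by Riemann--Hilbert over $C$, and then identify the pulled-back structure $(\tilde P,\tilde{\mathcal H})$ with $(P,\mathcal H)$ by running the comparison argument of Lemma \ref{L:obvious} over $U$; this forces you to upgrade the hypothesis of regularity along the general fiber of $f$ to regularity along the horizontal components of $(\mathcal P)_\infty$, which you do via Deligne's curve criterion, and you rightly flag this transfer as the delicate step. The paper instead never leaves $P$: it forms the codimension-two foliation $\mathcal G=\mathcal H\cap(f\circ\pi)^{*}(\mathrm{pt})$, applies Lemma \ref{L:obvious} only fiberwise --- over a general fiber of $f$ the restriction of $\mathcal H$ has regular singularities (exactly the hypothesis as stated) and trivial monodromy, hence is birationally trivial, so the leaves of $\mathcal G$ are algebraic --- and then invokes G\'omez-Mont's theorem \cite{MR1017286} to produce a rational first integral $F:P\dashrightarrow S$ whose target is the desired surface, with $\mathcal R$ the projection of $\mathcal H$ via \cite[Lemma 3.1]{MR2324555} and $p=F\circ\sigma$. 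Your approach buys an explicit $\mathbb P^1$-bundle $S\to C$ from the outset and avoids the algebraization theorem; the paper's stays strictly within the stated fiberwise hypothesis (no regularity transfer needed) at the cost of quoting G\'omez-Mont. One small imprecision at the end of your argument: generic transversality of $\phi\circ\sigma$ to $\tilde{\mathcal H}$ only excludes that $q\circ\phi\circ\sigma$ lands in an $\mathcal R$-invariant curve, not that it is dominant --- $p$ could still factor through $f$ via a non-invariant section of $S\to C$, in which case $\mathcal F$ is the fibration $f$ itself; but $p^{*}\mathcal R$ remains well defined and equal to $\mathcal F$ there, and the statement does not require dominance, so the conclusion is unaffected.
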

\begin{proof}
Let $\pi$ denote the projection of $P$, and  $\G$ be the codimension two foliation of $P$ obtained as the intersection of $\mathcal H$ with the foliation determined by $f$.
The leaves of $\mathcal G$  are the leaves of the restrictions $\H_{\vert f^{-1}(y)}$ over the fibers of $f$.
For generic $y\in C$, $\H_{\vert f^{-1}(y)}$ has regular singularities and trivial monodromy, hence according to Lemma \ref{L:obvious} it is birationaly equivalent to  the trivial horizontal codimension one foliation on the trivial $\Pu$-bundle over $f^{-1}(y)$.
The main result of \cite{MR1017286} (see also \cite[Section 8]{MR1860669})  implies that $\G$ is defined by the levels of a rational dominant map with connected fibers $F : P \dasharrow S$, with $S$ a smooth projective surface.
By construction, $f\circ \pi$ factors through $F$ : $f\circ \pi=q\circ F$, for some rational map $q:S \dasharrow C$. Up to birational transformation of $S$, we can suppose $q$ is holomorphic.
It follows from \cite[Lemma 3.1]{MR2324555} that $\H$ projects to a foliation $\mathcal R$ on $S$ such that $F^* \mathcal R= \H$.

For general $x\in X$, the restriction of $F$ to $\pi^{-1}(x) \simeq \mathbb P^1$ is not constant, and it separates the points of $\pi^{-1}(x)$ as they correspond to different leaves of $\mathcal G$. Therefore
$F$ takes a general fiber of $\pi$ biholomorphically into a fiber of $q$. We conclude that $\mathcal R$ is a Riccati foliation on $S$, with adapted fibration $q$. Defining  $p:=F\circ \sigma$ yields the conclusion.
 \end{proof}

\begin{prop}\label{P:pushRiccati}
Let $X$ and $Y$ be projective manifolds, $f:X \dashrightarrow Y$ a dominant rational map, and $\mathcal F$ a codimension one foliation on $Y$.
If $f^* \mathcal F$  is a pull-back of a transversely affine Riccati equation on a surface  then either  $\mathcal F$ is  a pull-back of a transversely affine Riccati equation on a surface, or there exists a
generically finite morphism $g: Z \to Y$ such that $g^* \mathcal F$ is given by a closed rational $1$-form.
\end{prop}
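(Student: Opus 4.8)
\emph{Overview.} The plan is to deduce the statement from Proposition~\ref{P:pullRiccati}, applied either to $\mathcal F$ itself or to a suitable finite cover of $Y$, after transporting along $f$ the fact that the monodromy factors through a curve. Since $f^*\mathcal F$ is a pull-back of a transversely affine Riccati foliation it is in particular transversely affine, hence so is $\mathcal F$ by the proposition comparing transverse affine structures on $\mathcal F$ and on $f^*\mathcal F$. If $\mathcal F$ is defined by a closed rational $1$-form the second alternative holds with $g=\mathrm{id}_Y$, so we may assume it is not; by Proposition~\ref{uniqueness} the foliation $\mathcal F$ then carries a \emph{unique} transverse affine structure $\nabla$. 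Write $\mathcal P_\nabla$ for the associated transverse projective structure and $\mu$ for its monodromy, which takes values in $\Aff\subset\PSL$.

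\emph{Transporting the hypothesis.} Write $f^*\mathcal F=\pi^*\mathcal R$ with $\pi:X\dashrightarrow S$ and $\mathcal R$ a transversely affine Riccati foliation on $S$ with adapted fibration $q:S\to C$. The monodromy of $\mathcal R$ factors through $\pi_1$ of a Zariski open subset of $C$, and the connection underlying its projective structure is holomorphic, hence regular, along the general fibre of $q$, since $\mathcal R$ is transverse to it; pulling back by $\pi$, the monodromy of the affine structure $\pi^*\nabla_{\mathcal R}$ on $f^*\mathcal F$ factors through $q\circ\pi:X\dashrightarrow C$, with regular singularities along the general fibre of $q\circ\pi$. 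On the other hand $f^*\mathcal P_\nabla$ is a transverse projective structure for $f^*\mathcal F$ with monodromy $\mu\circ f_*$, and $f_*$ has finite index image on fundamental groups (as $f$ is the composition of a dominant map with connected fibres and a generically finite one). If $f^*\nabla=\pi^*\nabla_{\mathcal R}$ we conclude that $\mu\circ f_*$ factors through a curve; otherwise $f^*\mathcal F$ admits two distinct transverse affine structures and is therefore defined by a closed rational $1$-form, a situation treated in the last step.

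\emph{Descent to $Y$.} Factor $f=\rho\circ h$ with $h:X\dashrightarrow W$ dominant with connected general fibre, $\rho:W\dashrightarrow Y$ generically finite, $W$ a projective manifold, and put $\mathcal G=\rho^*\mathcal F$. Since $h_*$ is onto and $\mathcal G$ restricts to the trivial foliation on a general fibre of $h$, either $q\circ\pi$ restricted to a general fibre of $h$ dominates $C$ — forcing the monodromy of $f^*\mathcal F$, hence of $\mathcal G$, hence of $\mathcal F$, to be finite — or $q\circ\pi$ factors as $\bar q\circ h$ with $\bar q:W\dashrightarrow C$, and then surjectivity of $h_*$ forces the monodromy of $(\mathcal G,\rho^*\nabla)$ to factor through $\bar q$, with regular singularities along the general fibre of $\bar q$ (regularity being insensitive to dominant pull-back). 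In the latter case Proposition~\ref{P:pullRiccati} realises $\mathcal G$ as a pull-back of a Riccati foliation $\mathcal R_W$ on a $\Pu$-bundle over $C$, transversely affine because its monodromy is conjugate to that of $\mathcal G$ and so lies in $\Aff$. To push this down to $Y$ one passes to a Galois closure $\tilde\rho:\tilde W\to Y$ of $\rho$ with group $G$, base-changes $f$ and resolves; $\rho^*\mathcal P_\nabla$ is $G$-equivariant, and — provided $\mu$ is not virtually abelian, so that $\bar q$ is the \emph{unique} fibration through which the monodromy of $\mathcal G$ factors and is hence $G$-invariant — the construction of $\mathcal R_W$ in the proof of Proposition~\ref{P:pullRiccati} can be carried out $G$-equivariantly; the quotient by $G$ then exhibits $\mathcal F$ as a pull-back of a Riccati foliation on a surface over $C/G$, transversely affine because $\Aff$ has a single fixed point on $\Pu$ whenever its image is non-abelian, so that the distinguished invariant section descends. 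In the remaining degenerate cases — $\mathcal F$, or a finite cover of it, with finite or virtually abelian monodromy — one argues, using the hypothesis that $f^*\mathcal F$ is a Riccati pull-back together with Propositions~\ref{uniqueness}, \ref{P:log}, \ref{P:holonomy} and Lemma~\ref{L:obvious}, that either $\mathcal F$ is again a Riccati pull-back, or a suitable finite cover $g:Z\to Y$ makes $g^*\mathcal F$ logarithmic with additive monodromy, hence defined by a closed rational $1$-form.

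\emph{Main obstacle.} The delicate point is the descent through the generically finite map $\rho$: it relies both on the $G$-equivariance of the construction in Proposition~\ref{P:pullRiccati} — which requires the uniqueness of the curve factorisation, and hence that the monodromy be large enough — and on a careful, separate analysis of the degenerate (finite or virtually abelian) monodromy cases, where the output is a closed rational $1$-form on a finite cover rather than a genuine Riccati foliation on $Y$.
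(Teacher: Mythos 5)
Your overall strategy --- recover the Riccati structure on a cover of $Y$ via the monodromy factorization and Proposition~\ref{P:pullRiccati}, then descend $G$-equivariantly --- is genuinely different from the paper's and, as written, has a real gap: the ``remaining degenerate cases'' (finite or virtually abelian monodromy) are not argued at all, and the exit you propose for them is not available. You claim that in those cases a suitable finite cover makes $g^*\mathcal F$ ``logarithmic with additive monodromy, hence defined by a closed rational $1$-form''; but nothing in the hypotheses forces the connection to become logarithmic --- reducing a non-logarithmic $\nabla$ to that situation is exactly the content of Theorems~\ref{T:add} and~\ref{T:mult}, which rely on $h^1(X,\mathbb C)=0$, a hypothesis Proposition~\ref{P:pushRiccati} does not carry --- and trivial monodromy alone does not imply the foliation is given by a closed rational $1$-form (Example~\ref{llibre}). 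Moreover Proposition~\ref{P:pushRiccati} is itself invoked inside the proofs of Theorems~\ref{T:add} and~\ref{T:mult}, so leaning on their methods here would be circular. The $G$-equivariance of the construction in Proposition~\ref{P:pullRiccati} is likewise asserted rather than proved, and only under the extra assumption that the monodromy is large, which pushes yet more weight onto the unhandled degenerate case.

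The paper's proof never looks at the monodromy. It reduces to $\dim X=\dim Y$, passes to a Galois closure $g=f\circ r:Z\dashrightarrow Y$, and splits into two cases using Proposition~\ref{uniqueness} only: if $g^*\mathcal F$ admits two distinct transverse affine structures it is defined by a closed rational $1$-form (second alternative); if the structure is unique, it --- and hence the associated projective triple $(P,\mathcal H,\sigma)$ --- is preserved by every birational deck transformation $\Phi$ of $g$. The object that descends is then the codimension-two foliation $\mathcal G$ cut out by the fibres of the \emph{given} pull-back map $(P,\mathcal H)\dashrightarrow(S,\mathcal H_0)$: if $\Phi^*\mathcal G\neq\mathcal G$ one would have two distinct foliations by codimension-two algebraic subvarieties tangent to $\mathcal H$, forcing the leaves of $\mathcal H$, hence of $\mathcal F$, to be algebraic, which lands back in the closed-form alternative. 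So $\mathcal G$ is $\Phi$-invariant and the Riccati pull-back structure descends to $Y$ directly. If you want to salvage your approach, the missing idea is precisely this uniqueness-of-the-fibration argument; it replaces both your equivariance claim and the entire degenerate-monodromy analysis.
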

\begin{proof}
It suffices to consider the case where $\dim X= \dim Y$ since we can  replace $X$ by a general submanifold  with the same dimension as $Y$.  Let $r:Z \dashrightarrow X$ be a dominant rational map between manifolds of the same dimension such that the composition $g=f\circ r$ defines a Galoisian field extension  $g^*:\mathbb C(Y) \to \mathbb C(Z)$, i.e., the group of birational transformations
$\varphi :Z \dashrightarrow Z$ which satisfy $g\circ \varphi = g$ acts transitively on the  general fiber of $g$. Notice that $g^*\mathcal F$ admits a transverse affine structure and  is the pull-back of a Riccati foliation on a surface.

If the transverse affine structure for $g^*\mathcal F$ is not unique then $g^*\mathcal F$ is defined by a closed rational $1$-form according to Proposition \ref{uniqueness}.

If the transverse affine structure for  $g^* \mathcal F$
is unique then it must be  invariant under birational maps $\varphi : Z \dashrightarrow Z$ such that $g\circ \varphi = g$. In other words,
if we consider the projective structure $(P,\mathcal H,\sigma)$ naturally associated to $g^* \mathcal F$ then every birational
deck transformation $\varphi$ of $g$ lifts to a birational map $\Phi :P \dashrightarrow P$ which preserves $\mathcal H$
and $\sigma$, i.e. $\Phi^* \mathcal H= \mathcal H$ and  $\Phi\circ \sigma=\sigma$. Since $g^* \mathcal F$ is a pull-back of a Riccati foliation $\mathcal H_0$ on a surface $S$, the same holds true for $\mathcal H$.
Notice that
the fibers of the pull-back map $(P,\mathcal H) \dashrightarrow (S, \mathcal H_0)$ define a codimension two foliation $\mathcal G$ by algebraic subvarieties tangent to $\mathcal H$.
To prove that $\mathcal F$ is also a pull-back of a Riccati  foliation on a surface it suffices to verify that $\mathcal G$ is invariant
by $\Phi$. Since $\mathcal H$ is invariant by $\Phi$, if  $\Phi^* \mathcal G \neq \mathcal G$ then $\Phi^* \mathcal G$ would be
another foliation by codimension two algebraic subvarieties tangent to $\mathcal H$. But this would imply that the leaves of $\mathcal H$
are algebraic and the same would hold true for $g^* \mathcal F$ and $\mathcal F$. This contradiction shows that $\Phi^* \mathcal G= \mathcal G$, and
therefore $\mathcal F$ is also a pull-back of a Riccati foliation on a surface under a rational map.
\end{proof}

\section{Cohomology jumping loci for local systems}

Let $X$ be a projective manifold and $U \subset X$ be the complement of a divisor $D$ of $X$.
In this section we are going to review results on the structure of representations
\[
\varrho : \pi_1(U) \longrightarrow \Aff
\]
which will be essential in what follows.

\subsection{Group cohomology}

Let  $\Gamma$ be a group, $V$ a finite dimension vector space, and $\rho: \Gamma \to \GL(V)$ a morphism of groups.  The homomorphism $\rho$ endows $V$ with the
structure of a $\Gamma$-module which we will denote by $V_{\rho}$. The first cohomology group of $\Gamma$ with values in $V_{\rho}$ can be defined as the quotient
of $1$-cocycles and $1$-coboundaries
\[
H^1(\Gamma, V_{\rho} ) = \frac { \left\{ \varphi : \Gamma \to V ; \varphi(\gamma_1 \cdot \gamma_2) = \varphi(\gamma_1) + \rho(\gamma_1) \varphi(\gamma_2) \right\} }
{\left\{ \varphi : \Gamma \to V ; \exists v \in V \text{ such that } \varphi(\gamma)= \rho(\gamma)v - v \, , \forall \gamma \in \Gamma\right\}  } \, .
\]

Let  $\varrho : \Gamma \to \Affn$ a representation in the affine group $\mathrm{Aff}(V) = \GL(V) \ltimes V$.
If $\gamma$ belongs to $\Gamma$ then we can write $\varrho(\gamma)(z) = \rho(\gamma)z + \tau(\gamma)$, where
$\rho : \Gamma \to \GL(V)$ is a homomorphism; and $\tau : \Gamma \to V$ is a $1$-cocycle with values in  $V_{\rho}$.
The class of $\tau$ in $H^1(\Gamma, V_{\rho})$ is trivial if and only if the action of $\varrho(\Gamma)$ fixes a point, so that
we can write $\varrho(\gamma)(v) = \rho(\gamma)\cdot(v - v_0) +v_0$. If $\rho$ is the trivial homomorphism then
$H^1(\Gamma, V_{\rho}) = \Hom(\Gamma, V)$.

\subsection{Cohomology jumping loci for quasi-projective manifolds}
Let  $U$ be a  quasi-projective manifold.  If $\Gamma=\pi_1(U)$ then $H^1(\Gamma, V_{\rho})=H^1(U,\mathbb C_{\rho})$, where $\mathbb C_{\rho}$ is the rank one local system on $U$ having $\rho$ as its monodromy.
The characteristic varieties of $U$ are defined as
\[
\Sigma^i_k(U) = \{ \rho \in  \Hom(\pi_1(U), \mathbb C^*) ; \dim H^i(U,\mathbb C_{\rho}) \ge k \} \, .
\]
When $U$ is compact they have been studied by Green-Lazarsfeld, Beauville, Catanese,  Simpson, Campana, Delzant
and others, and when $U$ is not proper they have been studied by Arapura, Dimca, Bartolo-Cogolludo-Matei,   Budur-Wang and others.
See  \cite{Bartolo:arXiv1005.4761}, \cite{Budur:arXiv1211.3766} and references therein.

Of particular interest for us, is the first characteristic variety which is described by the following theorem which combines
results by Arapura and Bartolo-Cogolludo-Matei, and is stated in \cite{Bartolo:arXiv1005.4761} in a slightly different form which
we present afterwards.

\begin{thm}\label{T:budur}
If $U$  is   a quasi-projective manifold then
$\Sigma^1_k(U)$ is  a finite union of torsion translates of subtori of $\Hom(\pi_1(U), \mathbb C^*)$.
Moreover,  each irreducible component of $\Sigma^1_k(U)$ of positive dimension is a  translate by a torsion element of a
subtorus of the form
\[
f^* \Hom(\pi_1(C), \mathbb C^*)
\]
where $C$ is a quasi-projective curve and $f : U \to C$ is a morphism.
\end{thm}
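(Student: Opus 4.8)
\section*{Proof proposal for Theorem \ref{T:budur}}

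\emph{Strategy.} This is a combination of a structural theorem of Arapura on cohomology support loci with an arithmetic refinement due to Artal Bartolo--Cogolludo--Matei and to Budur--Wang, and the plan is to organise the argument around these two inputs. Fix once and for all a smooth projective compactification $X$ of $U$ with $D=X\setminus U$ a simple normal crossings divisor. Recall that $\Hom(\pi_1(U),\mathbb C^*)=\Hom(H_1(U,\mathbb Z),\mathbb C^*)$ is the set of $\mathbb C$-points of an algebraic quasi-torus $\mathbb T$ over $\mathbb Z$ (a finite union of translates of its connected component $\mathbb T^0$), and that, near any of its points, $H^1(U,\mathbb C_\rho)$ is computed by a bounded complex of finite free modules over the local ring; hence $\Sigma^1_k(U)$ is Zariski closed in $\mathbb T$ and defined over $\mathbb Z$. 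So the content is entirely about the shape of this closed subscheme.

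\emph{First half: finite union of translated subtori, with positive-dimensional components pulled back from curves.} Here I would run Arapura's analysis. Fix a character $\rho_0$ lying on the component $W$ under study; after twisting by $\rho_0$ and choosing a logarithmic connection $d+\eta_{\rho_0}$ on $\mathcal O_X$ with residues realising $\rho_0$, a point of $W$ detects cohomology through logarithmic holomorphic $1$-forms that are flat for $d+\eta_{\rho_0}$. The key geometric step is a twisted Castelnuovo--de Franchis dichotomy: the presence of a positive-dimensional family of such forms forces two of them to be proportional up to the multivalued integrating factor $\exp(\int\eta_{\rho_0})$, and the resulting pencil of closed rational $1$-forms yields a morphism $f:U\to C$ onto a quasi-projective curve, carrying an orbifold structure along $D$, through which the whole of $W$ factors as a translate of $f^*\Hom(\pi_1(C),\mathbb C^*)$. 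Finiteness of the set of components, and the fact that the $0$-dimensional part is a finite set of points, follow from constructibility of $\Sigma^1_k(U)$ together with a Noetherian/compactness argument on the underlying Hodge data.

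\emph{Second half: each translate is by a torsion point.} The relevant symmetries of $\Sigma^1_k(U)$ inside $\mathbb T$ are three: it is defined over $\mathbb Z$, because it depends only on the finitely presented group $\pi_1(U)$; it is stable under complex conjugation $\rho\mapsto\bar\rho$, since $H^1(U,\mathbb C_{\bar\rho})=\overline{H^1(U,\mathbb C_\rho)}$; and, transported to the moduli of logarithmic Higgs line bundles via mixed nonabelian Hodge theory on $U$, it is stable under the Deligne--Simpson $\mathbb C^*$-action rescaling the Higgs field. A translated subtorus preserved by the $\mathbb C^*$-action is centred at a fixed point of that action, that is, at a unitary character; being also Galois-stable and one of finitely many components, this centre has finite Galois orbit with all conjugates unitary, hence is a torsion point by Kronecker's theorem, and $W=\rho_0\cdot T$ with $\rho_0$ torsion. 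In the quasi-projective case this is exactly what Budur--Wang and Artal Bartolo--Cogolludo--Matei carry out, with mixed Hodge theory replacing the pure theory of the proper case; I would cite their conclusion rather than reproduce the Hodge-theoretic input.

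\emph{Main obstacle.} The genuinely delicate step, were one to reconstruct rather than quote, is the curve-factorization in the first half: producing $f:U\to C$ from a twisted Castelnuovo--de Franchis statement and, above all, correctly accounting for the orbifold/parabolic contribution of the boundary divisor $D$, so that one recovers the full subtorus $f^*\Hom(\pi_1(C),\mathbb C^*)$ and not merely a proper sublocus of it. The quasi-torus structure of the character variety, constructibility of the jump loci, and the complex-conjugation and Galois symmetries are formal; the deep analytic content (Arapura) and the deep arithmetic rigidity (Simpson, extended by Budur--Wang and Artal Bartolo--Cogolludo--Matei) can be imported wholesale.
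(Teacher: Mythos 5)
The paper does not actually prove this statement. Theorem \ref{T:budur} is imported as a black box: the text introduces it as a combination of results of Arapura and of Artal Bartolo--Cogolludo--Matei, cites \cite{Bartolo:arXiv1005.4761} (with \cite{Budur:arXiv1211.3766} for context), and the only accompanying discussion is the reformulation of the positive-dimensional components in orbifold language, i.e.\ the equivalence with \cite[Theorem 1]{Bartolo:arXiv1005.4761}. So there is no in-paper argument to compare yours against.

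What you have written is a reasonable sketch of the architecture of the proofs in those references: the formal part (quasi-torus structure of $\Hom(\pi_1(U),\mathbb C^*)$, constructibility of the jump loci via a finite free complex, definability over $\mathbb Z$, conjugation stability) is correct and genuinely formal; the first hard step is indeed Arapura's twisted Castelnuovo--de Franchis/isotropic subspace argument producing the orbifold pencil $f:U\to C$ through which a positive-dimensional component factors; and the second is the Simpson-style rigidity ($\mathbb C^*$-action on the Dolbeault side, Galois stability, Kronecker) extended to the quasi-projective setting by mixed Hodge theory. You correctly identify the orbifold/boundary bookkeeping in the curve factorization as the delicate point. But since you defer both substantive steps to the very sources the paper cites, your proposal is at the same level of self-containedness as the paper's own treatment: it is an accurate map of the literature rather than an independent proof, and should be presented (as the paper presents it) as a quoted theorem.
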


In particular, if $\rho: \pi_1(U) \to \mathbb C^*$ belongs to a positive dimensional component of $\Sigma^1_k(U)$
then there exists an \'{e}tale covering $p:V \to U$ and $\rho':\pi_1(C) \to \mathbb C^*$ such that the following
diagram commutes.
\begin{center}
\begin{tikzpicture}
  \matrix (m) [matrix of math nodes,row sep=2em,column sep=4em,minimum width=2em]
  {
    \pi_1(V) & \pi_1(C) \\
      \pi_1(U) & \mathbb C^* \\};
  \path[-stealth]
    (m-1-1) edge node [above] {$(f\circ p)_*$} (m-1-2)
            edge node [left] {$p_*$} (m-2-1)
    (m-1-2) edge node [right] {$\rho'$} (m-2-2)
    (m-2-1)  edge node [above] {$\rho$} (m-2-2)  ;
\end{tikzpicture}
\end{center}

The covering $p$ is determined by the torsion character used to translate the subtorus and
is related to the presence of multiple fibers of the fibration $f:U\to C$.
This factorization is more succinctly stated in the language of orbifolds: there exist $C$ an orbifold of dimension one, $f:U \to C$ a morphism of orbifolds, and  a representation $\rho': \pi_1^{orb}(C) \to \mathbb C^*$
such that
\[
\rho = \rho' \circ f_* \, .
\]
This is the statement of \cite[Theorem $1$]{Bartolo:arXiv1005.4761}.

\section{Factorization of representations}
The result below is an easy consequence of Theorem \ref{T:budur}, and is  well-known to the
specialists. Indeed, in \cite[Theorem 5.1]{Bartolo:arXiv1005.4761} it appears as an important intermediate step toward the proof of Theorem \ref{T:budur}. Nevertheless, we present a proof using Theorem \ref{T:budur} as a black-box, since the argument is short and clarifies what sort of  obstructions one may find when trying to factorize a representation in $\Aff$ through a fibration.

\begin{thm}\label{T:factoriza}
Let $U$ be a quasi-projective manifold  and $\varrho : \pi_1(U) \to \Aff$ be
a representation in the affine group. If the image of $\varrho$ is Zariski dense in $\Aff$
then there exists an orbifold $C$ of dimension one, a morphism of orbifolds $f: U \to C$, and
a representation $\tilde{\varrho} : \pi_1^{orb}(C) \to \Aff$ factoring $\varrho$ as in the diagram below.
\begin{center}
\begin{tikzpicture}
  \matrix (m) [matrix of math nodes,row sep=2em,column sep=4em,minimum width=2em]
  {
    \pi_1(U) & \Aff  \\
      \pi_1^{orb}(C ) & \\};
  \path[-stealth]
    (m-1-1) edge node [above] {$\varrho$} (m-1-2)
            edge node [left] {$f_*$} (m-2-1)
    (m-2-1)  edge node [below] {$\tilde{\varrho}$} (m-1-2)  ;
\end{tikzpicture}
\end{center}
\end{thm}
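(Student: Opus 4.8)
The plan is to exploit Theorem \ref{T:budur} applied to the linear part $\rho : \pi_1(U) \to \mathbb C^*$ of $\varrho$. The Zariski density of $\varrho(\pi_1(U))$ in $\Aff$ forces $\rho$ to have infinite image, so $\rho$ is a non-torsion character. The key observation is that the cocycle $\tau : \pi_1(U) \to \mathbb C$ defining the translation part of $\varrho$ is a nonzero class in $H^1(\pi_1(U),\mathbb C_\rho) = H^1(U,\mathbb C_\rho)$: if $\tau$ were a coboundary, the action of $\varrho(\pi_1(U))$ would fix a point of $\mathbb C$, hence $\varrho(\pi_1(U))$ would be conjugate into $\mathbb C^* \subset \Aff$ and could not be Zariski dense. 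Therefore $\rho \in \Sigma^1_1(U)$.

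Next I would argue that $\rho$ lies on a \emph{positive-dimensional} component of $\Sigma^1_1(U)$. This is where the density hypothesis is used a second time: for any $t \in \mathbb C$, the representation $\varrho_t : \gamma \mapsto (z \mapsto \rho(\gamma)z + t\,\tau(\gamma))$ is again a representation into $\Aff$ whose linear part is $\rho$, and its translation cocycle $t\tau$ is nonzero for $t \neq 0$; but scaling the cocycle does not change the character, so in fact the relevant statement is that one can move $\rho$ inside $\Hom(\pi_1(U),\mathbb C^*)$ while keeping $H^1$ nonzero. Concretely: a zero-dimensional component of $\Sigma^1_1(U)$ consists of a single torsion character by Theorem \ref{T:budur}, and $\rho$ is non-torsion, so the component of $\Sigma^1_1(U)$ containing $\rho$ is positive-dimensional. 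By the second part of Theorem \ref{T:budur} (in its orbifold formulation, as stated at the end of Section 3), there exist a one-dimensional orbifold $C$, a morphism of orbifolds $f : U \to C$, and a character $\rho' : \pi_1^{orb}(C) \to \mathbb C^*$ with $\rho = \rho' \circ f_*$.

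It remains to show that the translation cocycle $\tau$ also descends through $f_*$. The cohomological input is that the pullback map $f^* : H^1(C,\mathbb C_{\rho'}) \to H^1(U,\mathbb C_\rho)$ is an isomorphism onto the subspace of $H^1(U,\mathbb C_\rho)$ cut out by the component of $\Sigma^1$ through $\rho$; more precisely, the structure theory underlying Theorem \ref{T:budur} identifies, for $\rho$ on the torsion-translated subtorus $f^*\Hom(\pi_1^{orb}(C),\mathbb C^*)$, the space $H^1(U,\mathbb C_\rho)$ with $H^1(C,\mathbb C_{\rho'})$ via $f^*$. Granting this, the class $[\tau] \in H^1(U,\mathbb C_\rho)$ is $f^*[\tau']$ for some cocycle $\tau'$ on $\pi_1^{orb}(C)$ with values in $\mathbb C_{\rho'}$, i.e. $\tau = \tau' \circ f_* + (\text{coboundary})$. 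Absorbing the coboundary by conjugating $\varrho$ by a translation, we get $\varrho = \tilde\varrho \circ f_*$ where $\tilde\varrho : \pi_1^{orb}(C) \to \Aff$ is built from $\rho'$ and $\tau'$. This gives the desired factorization.

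The main obstacle is the last step: justifying that $\tau$ — and not merely $\rho$ — factors through $f$. Theorem \ref{T:budur} as quoted only controls the character $\rho$; one needs the finer statement that, along a positive-dimensional component, the first cohomology of the local system is \emph{entirely} pulled back from the base curve. For generic $\rho$ on such a component this is part of the Arapura picture (the cohomology is concentrated in degree one and comes from the curve), but for the specific $\rho$ at hand one may need to first move to a nearby generic character, or invoke the stronger formulations in \cite{Bartolo:arXiv1005.4761} directly; handling the possible discrepancy between $H^1(U,\mathbb C_\rho)$ and $f^*H^1(C,\mathbb C_{\rho'})$ at special (e.g. torsion-translate) points of the component is the delicate part. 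A clean alternative is to cite \cite[Theorem 5.1]{Bartolo:arXiv1005.4761} directly, since — as the paper itself notes — the statement appears there as an intermediate result; but if one insists on deriving it from Theorem \ref{T:budur} as a black box, this cohomological descent of the cocycle is the step requiring care.
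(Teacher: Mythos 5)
Your setup coincides with the paper's: Zariski density forces the linear part $\rho$ to be non-torsion and the translation cocycle $\tau$ to be a nontrivial class in $H^1(U,\mathbb C_\rho)$, so $\rho$ sits on a positive-dimensional component of the characteristic variety and Theorem \ref{T:budur} produces the orbifold fibration $f:U\to C$ with $\rho=\rho'\circ f_*$. The gap is exactly where you flag it, and it is a real one: the assertion that $H^1(U,\mathbb C_\rho)$ is entirely pulled back from $C$ at the \emph{specific} character $\rho$ is not part of Theorem \ref{T:budur}, and it can genuinely fail at non-generic points of a component (extra cohomology classes not coming from the curve can appear at torsion translates or at intersections of components). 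Since you are not free to replace $\rho$ by a nearby generic character --- $\rho$ is the linear part of the given $\varrho$ --- "move to a nearby generic character" is not by itself an argument: you must explain how a conclusion for nearby characters transfers back to $\varrho$. As written, the descent of $\tau$ is asserted, not proved.

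The paper closes this gap by a different mechanism, entirely group-theoretic rather than cohomological. First it shows the germ of $\Sigma^1_k(U)$ at $\rho$ ($k=h^1(U,\mathbb C_\rho)$) is irreducible --- two components through $\rho$ would give two independent fibrations and force $\rho$ to be torsion by \cite[Lemma 4.19]{Debarre} --- so $h^1$ is constant near $\rho$ and Simpson's \cite[Lemma 2.1]{MR1240576} yields a holomorphic family $\Psi(\rho')$ of affine representations with $\Psi(\rho)=\varrho$ and linear part $\rho'$. Then, from the homotopy exact sequence of $f$, the image $\varrho(\pi_1(F))$ of a general fiber is a finitely generated subgroup of $(\mathbb C,+)$ normalized by multiplication by every $\lambda\in\rho(\pi_1(U))$; finite generation forces each such $\lambda$ and $\lambda^{-1}$ to be algebraic integers unless $\varrho(\pi_1(F))=0$. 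A general $\rho'$ in the positive-dimensional germ is not valued in algebraic numbers, so $\Psi(\rho')$ is trivial on fibers, i.e.\ factors; factoring is a closed condition on the family, so $\varrho=\Psi(\rho)$ factors too. This deformation-plus-arithmetic argument is precisely the substitute for the cohomological identification you were missing; alternatively, as you note, one could simply invoke \cite[Theorem 5.1]{Bartolo:arXiv1005.4761}, but that abandons the stated goal of using Theorem \ref{T:budur} as a black box.
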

\begin{proof}
Let $\rho : \pi_1(U) \to \mathbb C^*$ be the linear part  of $\varrho$, i.e. $\rho$ is the composition of
$\varrho$ with the natural projection
$\Aff \to \mathbb C^*$. Since $\varrho$ has Zariski dense image, it must be non abelian  and therefore
$k = h^1(U, \mathbb C_{\rho})>0$.  Since $\rho$ is not torsion, Theorem~\ref{T:budur} implies
that the germ $\Sigma_{\rho}$ of  $\Sigma^1_k(U)$ at $\rho$ is smooth of positive dimension. Indeed, if there are two
distinct irreducible components through $\rho$ then $\rho$ factors through two distinct fibrations and the general fiber
of one of the fibrations dominates the basis of the other fibration. Since the representation is the identity over the general fiber
of both fibrations, \cite[Lemma 4.19]{Debarre} implies that the representation has finite image, i.e., $\rho$ is torsion. Therefore every
$\rho' \in \Sigma_{\rho}$ satisfies $h^1(U, \mathbb C_{\rho'}) =h^1(U, \mathbb C_{\rho})$.
Consequently
we have a morphism $\Psi : \Sigma_{\rho} \to \Hom(\pi_1(U),\Aff)$ such that
$\Psi(\rho) = \varrho$ and $\Psi(\rho')$ is a representation with  linear part $\rho'$,  see \cite[Lemma 2.1]{MR1240576}.

Let $f:U \to C$ be the morphism of orbifolds given by Theorem \ref{T:budur}.
Let $U_0 \subset U$ be a Zariski open subset such that the restriction of $f$ to $U_0$ is
a smooth fibration, locally trivial in the $C^{\infty}$ category, over $C_0= f(U_0)$. Let us
compare the long exact sequence for the homotopy groups of a fibration
with the factorization of the linear
part of $\varrho$.
\begin{center}
\begin{tikzpicture}
  \matrix (m) [matrix of math nodes,row sep=2em,column sep=4em,minimum width=2em]
  {
   0 & (\mathbb C,+) & \Aff & \mathbb C^* & 1  \\
   0 & \pi_1(F) & \pi_1 (U_0) & \pi_1(C_0 ) & 1\\};
  \path[-stealth]
    (m-1-1) edge (m-1-2)
    (m-1-2) edge (m-1-3)
    (m-1-3) edge (m-1-4)
    (m-1-4) edge (m-1-5)
    (m-2-1) edge (m-2-2)
    (m-2-2) edge (m-2-3)
    (m-2-3) edge node [above] {$f_*$} (m-2-4)
    (m-2-4) edge (m-2-5)
    (m-2-2) edge node [left] {$\varrho$}  (m-1-2)
    (m-2-3)     edge node [left] {$\varrho$} (m-1-3)
    (m-2-4)  edge node [left] {$\rho$} (m-1-4)  ;
\end{tikzpicture}
\end{center}
From this diagram we deduce that $\varrho(\pi_1(F))$ is a finitely
generated normal subgroup of $\varrho(\pi_1(U_0))= \varrho(\pi_1(U))$
with trivial linear part. If $\tau \in  \varrho(\pi_1(F)) \subset (\mathbb C,+)$
and $\lambda \in \rho(\pi_1(U_0))$, then by conjugation
\[
 \lambda^i \tau \in \varrho\left(\pi_1(F)\right)
\]
for every $i \in \mathbb Z$. Since $\varrho(\pi_1(F))$ is finitely generated,
if $\tau \neq 0$  then both $\lambda$ and $\lambda^{-1}$ are roots of polynomials with integer coefficients.
Therefore  either
$\varrho(\pi_1(F))= 0 $ or   $\rho(\pi_1(U_0))$ is contained
in the ring of algebraic integers of some number field $K \subset \mathbb C$.

Notice that  a general $\rho' \in \Sigma_{\rho}$
is not defined over a number field and therefore the representation $\Psi(\rho')$ factors.
Since the factorization is equivalent to the triviality
of $\Psi(\rho')$ over a general fiber of $f$ and this is a closed property, it follows that the representation $\varrho=\Psi(\rho)$
also factors as wanted.
\end{proof}

\section{Proof of Theorem \ref{THM:A} }

Let $(\mathcal F,\nabla)$ be  a transversely affine codimension one foliation on a projective manifold $X$ with singular divisor $D$ and complement $U=X-D$.
Let $\varrho: \pi_1( U ) \to  \Aff$ be its monodromy representation and
$\rho : \pi_1( U ) \to \C^*$ be its multiplicative part.

We will divide the proof of Theorem \ref{THM:A} according to the properties of  $\varrho$.

\subsection{Zariski dense monodromy} Under the assumption that $\varrho$ has Zariski dense image we are able
to prove Theorem \ref{THM:A} on arbitrary projective manifolds as already mentioned in the introduction.

\begin{thm}
Let $X$ be a projective manifold and let $(\mathcal F,\nabla)$ be  a transversely affine codimension one foliation on $X$.  If the monodromy of $(\mathcal F, \nabla)$
 is Zariski dense in $\Aff$ then  there exist a transversely affine Ricatti foliation $\mathcal R$ on a projective surface $S$ and
a rational map $p:X \dashrightarrow S$ such that $p^* \mathcal R = \mathcal F$.
\end{thm}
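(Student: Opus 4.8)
The strategy is to combine the factorization result for Zariski-dense affine representations (Theorem \ref{T:factoriza}) with the descent-to-a-Riccati-foliation machinery (Proposition \ref{P:pullRiccati}). First I would apply Theorem \ref{T:factoriza} to $\varrho : \pi_1(U) \to \Aff$: since the monodromy is Zariski dense, there is a one-dimensional orbifold $C$, a morphism of orbifolds $f : U \to C$, and a representation $\tilde{\varrho} : \pi_1^{orb}(C) \to \Aff$ with $\varrho = \tilde{\varrho} \circ f_*$. The morphism $f$ is the restriction of a rational map $X \dashrightarrow C$ (extend and resolve); after blowing up $X$ — which changes neither $\mathcal F$ up to the obvious correspondence nor the monodromy — I may assume $f$ is a genuine fibration $X \to C$ with connected fibers, with the orbifold structure on $C$ accounting for multiple fibers.

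Next I would transfer this to the transverse \emph{projective} structure $\mathcal P_\nabla = (P, \mathcal H, \sigma)$ that every transversely affine foliation carries, described at the end of Section 2: here $P = \mathbb P(\calN\F \oplus \mathcal O_X)$ and $\mathcal H$ is the projectivization of the flat connection $\D = \fnabla + i(\omega)$, whose local connection matrix is $\left[\begin{smallmatrix}\eta & \tilde\omega \\ 0 & 0\end{smallmatrix}\right]$. The $\mathrm{PSL}(2,\mathbb C)$-monodromy of $\mathcal P_\nabla$ is just the image of $\varrho$ under $\Aff \hookrightarrow \PSL$, so it also factors through $f_*$ (in the orbifold sense, hence — after passing to the étale cover killing the torsion / multiple-fiber discrepancy — through an honest $\pi_1(C_0)$, where $C_0$ is the locus over which $f$ is a locally trivial $C^\infty$ fibration). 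To apply Proposition \ref{P:pullRiccati} I must also check the regular-singularities hypothesis along a general fiber of $f$: the connection $\D$ built from $(\mathcal F,\nabla)$ is reducible with an abelian sub- and quotient-connection, and its restriction to a general fiber has poles only along the (transverse) restriction of $D$; since logarithmic-type poles give regularity and the non-logarithmic phenomena are confined to the behavior transverse to components of $D$ — not along a generic fiber meeting $D$ transversely — the restriction to the general fiber is regular. This is the point I expect to need the most care: pinning down that one may choose the fibration (equivalently, the blow-up of $X$) so that $\mathcal H$ restricted to the general fiber genuinely has regular singularities, rather than merely trivial monodromy. Once that is in place, Proposition \ref{P:pullRiccati} produces a $\mathbb P^1$-bundle $S \to C$, a Riccati foliation $\mathcal R$ on $S$, and a rational map $p : X \dashrightarrow S$ with $p^*\mathcal R = \mathcal F$.

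It remains to see that $\mathcal R$ is \emph{transversely affine}. This is where I would use that the structure on $\mathcal F$ — not just the foliation — descends. The section $\sigma : X \to P$ restricts on a general fiber of $f$ to a section transverse to the trivialized horizontal foliation; pushing it forward along $F : P \dashrightarrow S$ gives a rational section $\sigma_0$ of $S \to C$ that is $\mathcal R$-invariant generically, hence an $\mathcal R$-invariant hypersurface of $S$ dominating $C$. By the Liouville-type criterion recalled in the ``Riccati foliations'' example, a Riccati foliation admitting such an invariant hypersurface is transversely affine; concretely, in a birational model $S = C \times \mathbb P^1$ with $\sigma_0$ the section at infinity, $\mathcal R$ is given by $dy + \alpha + y\beta$ with $\alpha,\beta$ pulled back from $C$, $\beta$ closed, $d\alpha = \alpha\wedge\beta$, so $\eta_0 = \beta$ defines a transverse affine structure. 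Thus $\mathcal R$ is a transversely affine Riccati foliation on the surface $S$ with $p^*\mathcal R = \mathcal F$, which is exactly assertion (2) of Theorem \ref{THM:A} (and the statement to be proved). One should double-check that $\mathcal F$ is not already defined by a closed rational $1$-form — but that would force $\varrho$ to have image in $\mathbb C^*$ or $(\mathbb C,+)$ by Example \ref{E:nonunique}, contradicting Zariski density, so no dichotomy is needed here and conclusion (2) holds outright.
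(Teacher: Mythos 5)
There is a genuine gap at precisely the point you flag as needing the most care, and the justification you offer there is in fact backwards. You claim that the restriction of the connection to a general fiber of $f$ has regular singularities because ``the non-logarithmic phenomena are confined to the behavior transverse to components of $D$ --- not along a generic fiber meeting $D$ transversely.'' This is the opposite of the truth: if an irreducible component $H$ of the polar divisor of $\nabla$ is a non-logarithmic pole \emph{and} dominates $C$ (equivalently, is generically transverse to the fibers of $f$), then a general fiber meets $H$ transversely and the restriction of the local connection form $\eta = \lambda\,\frac{dh}{h} + d(1/a(h))$ to that fiber acquires an irregular singularity at each intersection point. So the hypothesis of Proposition \ref{P:pullRiccati} (regular singularities of the flat connection defining $\mathcal H$ along the general fiber) is exactly what fails in this situation, and nothing in your argument rules it out. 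Blowing up $X$ does not help, since $H$ dominating $C$ is a birationally invariant configuration.

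The missing step is to \emph{exclude} such components $H$, and this is where the Zariski density of the monodromy must be used a second time. The paper's argument runs as follows: if $H$ is a non-logarithmic component of the polar divisor dominating $C$, then by Proposition \ref{P:holonomy} the holonomy of $H$ is virtually abelian, so there is a finite index subgroup $G \subset \pi_1(U_H)$ (with $U_H = f^{-1}(f(U))\cap H$) whose commutators have trivial holonomy; loops in $[G,G]$ then lift to nearby leaves of $\mathcal F$, forcing $\varrho_H([G,G]) = \{\mathrm{id}\}$, where $\varrho_H = \varrho_f \circ (f|_H)_*$. Since $f|_H$ dominates $C$, the image $f_*\pi_1(U_H)$ has finite index in $\pi_1(C_U)$ (\cite[Lemma 4.19]{Debarre}), so the monodromy group $\varrho(\pi_1(U)) = \varrho_f(\pi_1(C_U))$ would be virtually metabelian with a finite-index subgroup whose commutator subgroup dies --- contradicting Zariski density in $\Aff$. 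Only after this exclusion can Proposition \ref{P:pullRiccati} be applied. The remainder of your outline (factorization via Theorem \ref{T:factoriza}, handling multiple fibers by enlarging $D$, and the descent of the transverse affine structure to $\mathcal R$) agrees with the paper and is fine.
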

\begin{proof}
Suppose that $\varrho$ has Zariski dense image in $\Aff$. Theorem \ref{T:factoriza} implies that
the representation $\varrho$ factors through a morphism of orbifolds $f:U \to C_0$ where $C_0$ is a quasi-projective orbicurve.
Include in $D$ the fibers of $f$ over the  multiple fibers of $f$. Over the new $U=X-D$,
$f$ is just a regular morphism to a quasi-projective curve $C_0$, restriction of a rational map $f: X \dashrightarrow C$
between projective manifolds. Modulo  resolving  the indeterminacies of this map, we can assume that $f : X \to C$ is regular, and
its restriction to a Zariski open subset $U$ factors the monodromy of $(\mathcal F,\nabla)$ through a quasi-projective curve $C_U= f(U)$, i.e.
there exists $\varrho_f : \pi_1(C_U) \to \Aff$ such that $\varrho_f \circ f_* =  \varrho$.

In order to be able to apply Proposition \ref{P:pullRiccati} we have to exclude the existence of  irreducible components
$H$ of the singular set of $\nabla$ which are not logarithmic and have image under $f$ dominating $C$.
We argue by contradiction and assume the existence of an irreducible component $H$ in the polar set of $\nabla$ which is not logarithmic
and which dominates $C$.
Since the  monodromy of $(\mathcal F,\nabla)$ factors through $C_U$, it induces a  representation  $\varrho_H : \pi_1(U_H) \to \Aff$, where $U_H = f^{-1}(f(U)) \cap H$, and $\varrho_H = \varrho_f \circ (f_{|H})_*$. Moreover, according to Proposition \ref{P:holonomy},
there exists a finite index subgroup $G$ of $\pi_1(U_H)$ whose image under the holonomy representation is abelian.

 Since elements in $[G,G]$ have trivial holonomy, representatives of them lift to leaves of $\mathcal F$ nearby $H$, and consequently $\varrho_H([G,G])=\{\mathrm{id}\}$. To arrive at a contradiction with the density of the monodromy group notice that  $f_*\pi_1(U_H)$ has finite index in $\pi_1(C_U)$ according to \cite[Lemma 4.19]{Debarre}.  This proves that such an $H$ cannot exist, and the theorem follows from Proposition~\ref{P:pullRiccati}.
\end{proof}

\subsection{Virtually additive monodromy}

\begin{thm}\label{T:add}
Let $X$ be a projective manifold with $h^1(X,\mathbb C)=0$ and let $(\mathcal F,\nabla)$ be a transversely affine codimension one foliation on $X$.  If the monodromy of $(\mathcal F, \nabla)$  is contained in a finite extension of $(\mathbb C,+) \subset \Aff$,  then either there exists a generically finite Galois morphism $p:Y\to X$ such that $p^*\mathcal F$ is defined by a closed rational $1$-form or there exist a transversely affine Ricatti foliation $\mathcal R$ on a surface $S$ and a rational map $p:X \dashrightarrow S$ such that $p^* \mathcal R = \mathcal F$.
\end{thm}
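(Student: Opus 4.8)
The plan is to reduce to the case where the monodromy $\varrho$ is exactly contained in $(\mathbb C,+)$, and then analyze the linear part $\rho$ (which is now trivial) together with the cohomology class of the cocycle $\tau$. First I would handle the finite extension: since $\varrho$ lands in a finite extension $\Gamma$ of $(\mathbb C,+)$, the linear part $\rho:\pi_1(U)\to\mathbb C^*$ has finite image, so it corresponds to a finite cyclic covering $q:U'\to U$ which extends to a generically finite Galois morphism $p:Y\to X$ with $Y$ projective (after resolving singularities). On $Y$ the pulled-back structure $(p^*\mathcal F, p^*\nabla)$ has monodromy contained in $(\mathbb C,+)$. Note that $h^1(Y,\mathbb C)$ need not vanish, but that is fine: the monodromy being additive is preserved, and we only need the hypothesis $h^1(X,\mathbb C)=0$ via Remark~\ref{R:trivial} applied on $X$, or we can carry the cohomological bookkeeping on $X$ directly (see below). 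So from now on assume $\varrho:\pi_1(U)\to(\mathbb C,+)$, i.e. $\varrho\in\Hom(\pi_1(U),\mathbb C)=H^1(U,\mathbb C)$.

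The next step is to realize the additive monodromy by a genuine closed rational $1$-form if $h^1$ of the complement behaves well, and otherwise to push it through a fibration. The class $\varrho\in H^1(U,\mathbb C)$ is represented by a closed logarithmic $1$-form $\theta$ on $X$ with poles on $D$: because $\varrho$ has trivial linear part, the developing map's ``multiplicative'' contribution disappears and $\omega_1:=\exp(\int\eta_0)\omega_0$ is, up to the additive period, a closed rational $1$-form whose periods along loops in $U$ are exactly $\varrho(\gamma)$. More precisely: since $\rho$ is trivial, the connection $\nabla$ on $N\mathcal F$ has vanishing residues modulo $\mathbb Z$ along every component of $D$ and trivial monodromy, so by Proposition~\ref{P:log} and Remark~\ref{R:trivial} (here using $h^1(X,\mathbb C)=0$) the $1$-form $\eta_0$ is $d\log$ of a rational function $g$, whence $(g\omega_0, 0)$ represents $(\mathcal F,\nabla)$ and $g\omega_0$ is a \emph{closed} rational $1$-form on $X$. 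This would already give conclusion (1) with $p=\mathrm{id}$. The subtlety is that on $X$ we may only get this after the covering $p:Y\to X$ above — and $h^1(Y,\mathbb C)$ can be nonzero — so $\eta_0$ on $Y$ is closed with no residues but possibly not exact: it is a nonzero class in $H^1(Y,\mathbb C)$.

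So the remaining case is: on $Y$ (projective, possibly $h^1\neq 0$) we have $(\mathcal F',\nabla')=(p^*\mathcal F,p^*\nabla)$ with $\nabla'$ logarithmic, trivial monodromy in its linear part, and $\eta_0$ a closed rational (in fact logarithmic) $1$-form which is not exact. By the theory of closed logarithmic $1$-forms on projective manifolds and the Castelnuovo–de Franchis type results already invoked in the paper (the reference \cite{MR1017286} and \cite[Section 8]{MR1860669} used in Proposition~\ref{P:pullRiccati}, together with \cite{MR0417174}), a closed rational $1$-form with no residues and nontrivial periods factors through a fibration $f:Y\to C$ onto a curve: either $\eta_0 = f^*\eta_C$ for a $1$-form on $C$, or the full affine monodromy $\varrho'$ factors through $f_*$. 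In the latter case the transverse projective structure $\mathcal P_{\nabla'}$ attached to $(\mathcal F',\nabla')$ has regular (logarithmic) singularities and monodromy factoring through $f$, so Proposition~\ref{P:pullRiccati} applies and exhibits $\mathcal F'$, hence $\mathcal F$ (after pushing down via Proposition~\ref{P:pushRiccati}), as the pull-back of a transversely affine Riccati foliation on a surface — which is conclusion (2). If instead no such fibration is forced, one shows directly that the cocycle $\tau$ is a coboundary after a further finite covering (the obstruction lives in a finite-dimensional $H^1$ and is killed by passing to the fibration's base or by an abelian covering), returning to conclusion (1).

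The main obstacle I expect is the passage from ``trivial linear monodromy'' to ``$\eta_0$ exact, or else factors through a curve'': on a projective manifold with $h^1=0$ this is immediate from Remark~\ref{R:trivial}, but precisely here we may have been forced onto a covering $Y$ with $h^1(Y,\mathbb C)\neq 0$, and the clean dichotomy ``exact vs. pulled back from a curve'' for closed logarithmic $1$-forms requires the logarithmic Castelnuovo–de Franchis / Deligne theory rather than a soft argument. Equivalently, one must control the characteristic variety $\Sigma^1_k(U)$ near the \emph{trivial} character — exactly the boundary case excluded from the Zariski-dense argument — and use Theorem~\ref{T:budur} to produce the fibration $f:U\to C$, then check its singular fibers can be absorbed into $D$ and that $\nabla$ remains regular along general fibers so that Proposition~\ref{P:pullRiccati} is applicable. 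Handling the interplay between the multiple fibers of $f$, the torsion character defining the covering $p$, and the divisor $D$ is the delicate bookkeeping at the heart of the proof.
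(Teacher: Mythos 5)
There is a genuine gap, and it sits exactly at the heart of the theorem. Your claim that trivial linear monodromy together with $h^1(X,\mathbb C)=0$ forces $\eta_0=d\log g$ for a \emph{rational} $g$ (hence conclusion (1) with $p=\mathrm{id}$) is false. What Proposition \ref{P:log} and Remark \ref{R:trivial} actually give is a decomposition $\nabla=\nabla_{\log}+dg_0$ with $\nabla_{\log}$ logarithmic and $g_0$ a rational function; when $\nabla$ is not logarithmic, $g_0$ is non-constant, so in a rational trivialization $\eta_0=d\log F+dg_0$ and the integrating factor is $F\,e^{g_0}$, which is transcendental. Trivial monodromy does \emph{not} then yield a closed rational $1$-form defining $\mathcal F$: Example \ref{llibre} (the Gin\'e--Llibre foliation on $\mathbb P^1\times\mathbb P^1$, where $h^1=0$) has trivial monodromy, is not defined by a closed rational $1$-form, and lands in conclusion (2). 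Consequently the ``remaining case'' you isolate --- $\eta_0$ a non-exact class on a covering $Y$ with $h^1(Y,\mathbb C)\neq 0$ --- is not the hard case at all; the hard case is the non-logarithmic one, already on $X$.

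The paper's proof handles it as follows, and none of this machinery appears in your proposal. The fibration is extracted from the \emph{exact part} of the connection: since $h^1(X,\mathbb C)=0$, one writes $\nabla-\nabla_{\log}=dg$ and takes the Stein factorization $h:X\to\mathbb P^1$ of $g$. One then proves that the affine monodromy factors through $h$ by showing that the periods $F(t)=\int_{\delta_t}\omega_{0,t}$ of $\omega_0$ on the fibers of $h$ all vanish: they satisfy the Picard--Fuchs equation $dF/F=\beta_0$, so a nonzero period would have non-moderate growth at a non-logarithmic pole of $\beta_0$, contradicting Deligne's regularity theorem \cite[Th\'eor\`eme 1.8, p.~125]{MR0417174}. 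Only then does Proposition \ref{P:pullRiccati} apply (with Proposition \ref{P:pushRiccati} to descend after the finite cover that makes the residues integral). The substitutes you propose do not fill this role: a Castelnuovo--de Franchis-type dichotomy applies to closed \emph{logarithmic} forms, whereas here the relevant form has higher-order poles; and Theorem \ref{T:budur}/\ref{T:factoriza} cannot be invoked near the trivial character, which is a torsion point of $\Sigma^1_k(U)$ where the positive-dimensional-component structure theorem gives no fibration (the factorization theorem explicitly requires Zariski dense image). Your treatment of the logarithmic case is essentially fine (the paper argues more directly on $X$, using that rational residues make $\prod f_i^{\lambda_i}$ algebraic, thereby avoiding the $h^1(Y)\neq 0$ complication you create), but the non-logarithmic case is missing its central argument.
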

\begin{proof}
If $\nabla$ is logarithmic, with connection form $\eta_0$ in a Zariski open set, then $ \exp(\int \eta_0) $ is a multi-valued algebraic function. Its branches determine a generically finite Galois morphism  $p:Y\to X$ such that $p^*\mathcal F$ is defined by a closed rational $1$-form.

Suppose  that $\nabla$ is not logarithmic.  Proposition \ref{P:log} implies the existence of a logarithmic connection $\nabla_{\log}$ on $N\mathcal F$ having the same residues as $\nabla$.
As explained in Remark \ref{R:trivial}, the difference $\nabla_{\log} - \nabla$ is a closed rational $1$-form $\beta$   without residues. Since $h^1(X,\mathbb C)=0$, the $1$-form $\beta$ is exact in the sense that there exists a rational function $g \in \mathbb C(X)$ such that $\beta = dg$. We can assume that $g$ defines a regular morphism   from $X$ to $\mathbb P^1$. Let $h: X \to \mathbb P^1$ be the Stein factorization of $g$. Of course the target is still $\mathbb P^1$ as we are assuming $h^1(X,\mathbb C)=0$.

We want to show that the monodromy representation factors through $h$. To that end suppose first that all the residues of $\nabla$ are integers. In this case we can choose a pair of rational $1$-forms  $\omega_0, \eta_0$ in a Zariski open subset $U$ such that  $\eta_0 = h^* \beta_0$ for some rational $1$-form $\beta_0$ on $\mathbb P^1$. The equation $d\omega_0 = \omega_0 \wedge h^* \beta_0$ implies that $\omega_0$ is closed when restricted to the general  fiber of $h$. To prove the factorization of the monodromy it suffices the restriction of $\omega_0$ to a general fiber of $h$ is not only closed, but exact.

Let $p$ be a non-logarithmic pole  of $\beta_0$ and let us suppose that $U$ intersects the fiber over $p$, otherwise we can start with a different $U$.  Replacing $U$ by a smaller open subset we can suppose that $h$ restricted to  $U^* = U \setminus h^{-1}(p)$ is a locally trivial $C^{\infty}$-fibration over $T^*= h(U^*)$. The $1$-form $\omega_0$ can be interpreted as  a family $\omega_{0,t}$  of closed rational $1$-forms on the quasi-projective manifolds $U_t = U \cap h^{-1}(t)$ parametrized by $t \in T^*$. We want to prove that for a general $t \in T^*$ and any $\delta_t \in H_1(U_t,\mathbb Z)$ the integral $\int_{\delta_t} \omega_{0,t}$ is equal to zero.

The multi-valued function
\begin{align*}
  F : T^*  &\longrightarrow \mathbb C  \\
     t &\longmapsto \int_{\delta_t} \omega_{0,t}
\end{align*}
obtained by continuous deformation of $\delta_t \in H_1(U_t,\mathbb Z)$ satisfies the so called  Picard-Fuchs equation, see \cite[Chapters $10$ and $12$]{MR2919697} specially \S$10.2.4$ and \S$12.2.1$.
Let us fix an arbitrary point $t_0 \in T^*$. For $t$ sufficiently close to $t_0$, let $\gamma_t$ be a real curve in $T^*$ joining $t_0$ to $t$,  let  $\Delta_t$ be
 a real two-dimensional surface on $U$ which projects to  $\gamma_t$, intersects fibers of $h$ over a  point $s$ in the path $\gamma_t$
 at the cycle $\delta_s$, and consequently has boundary equal to $\delta_t - \delta_{t_0}$.
Then
\begin{align*}
dF & = \frac{\partial}{\partial t}\left( \int_{\delta_t} \omega_{0,t} \right) dt   = \frac{\partial}{\partial t} \left( \int_{\Delta_t} d \omega_{0} +  \int_{\delta_{t_0}} \omega_{0,t_0} \right) dt \\
 &=  \frac{\partial}{\partial t} \left( \int_{\Delta_t} \omega_0 \wedge h^* \beta_0 \right) dt   \\
 & = \frac{\partial}{\partial t} \left( \int_{\gamma_t} \left( \int_{\delta_t} \omega_0 \right)  \cdot  \beta_0 \right) dt  = F \cdot \beta_ 0   \,
\end{align*}
where we have used Stokes Theorem in the first line, and Fubini Theorem to pass from the second to the third line.  Thus in our setting,
the Picard-Fuchs equation is nothing but
 \[
 \frac{dF}{F} = \beta_0 \, .
 \]
Therefore, if one of the periods of $\omega_{0,t}$ is  not zero then the function $F(t)$ does not have moderate growth when
we approach $p$. But this contradicts   \cite[Th\'{e}or\`{e}me 1.8, page 125]{MR0417174} (see also \cite[Theorem 12.3]{MR2919697}) which   roughly says that the periods of holomorphic families of rational $1$-forms are functions with moderate growth at infinity. We conclude that periods of $\omega_{0,t}$ are zero for any $t \in T^*$, i.e. $\omega_0$ is exact on the general fiber of $h$ and therefore the monodromy factors through $h$. We apply Proposition \ref{P:pullRiccati} to conclude that $\mathcal F$ is a pull-back of a Riccati foliation over a rational surface.

If the residues of $\nabla$ are not integers in general, they must be rational and we can apply the above arguments to the pull-back of $\mathcal F$ under a generically finite rational map $p:Y \to X$  to conclude that $p^* \mathcal F$ is the pull-back of a Riccati foliation on a surface. Proposition \ref{P:pushRiccati} allows us to conclude.
\end{proof}

\subsection{Multiplicative monodromy}

\begin{thm}\label{T:mult}
Let $X$ be a projective manifold with $h^1(X,\mathbb C)=0$ and let $(\mathcal F,\nabla)$ be a transversely affine codimension one foliation on $X$.  If the monodromy of $(\mathcal F, \nabla)$ is contained in $(\mathbb C^* ,\cdot) \subset \Aff$  then either there exists a generically finite Galois morphism $p:Y\to X$ such that $p^*\mathcal F$ is defined by a closed rational $1$-form or there exists a transversely affine Ricatti foliation $\mathcal R$ on a surface $S$ and a rational map $p:X \dashrightarrow S$ such that $p^* \mathcal R = \mathcal F$.
\end{thm}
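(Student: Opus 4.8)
The plan is to treat the case of monodromy in $(\C^*,\cdot)$ as a ``twisted'' version of the additive case, transporting the analysis of Picard--Fuchs equations from Theorem~\ref{T:add} to this setting. The starting point is the same: using Proposition~\ref{P:log} one produces a logarithmic connection $\nabla_{\log}$ on $N\F$ with the same residues as $\nabla$, so that $\nabla_{\log}-\nabla = \beta$ is a closed rational $1$-form without residues, hence (since $h^1(X,\C)=0$) exact, $\beta = dg$. As before one replaces $g$ by its Stein factorization $h : X \to \Pu$ and aims to prove that the monodromy representation $\varrho$ factors through $h$; once this is achieved, Proposition~\ref{P:pullRiccati} gives the Riccati pull-back conclusion (and Proposition~\ref{P:pushRiccati} handles the passage from $p^*\F$ back to $\F$ when one first needs a generically finite base change to make residues integral). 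If $\nabla$ is already logarithmic, then $\exp(\int\eta_0)$ is multivalued algebraic and its branches define the Galois cover $p:Y\to X$ on which $p^*\F$ is given by a closed rational $1$-form, exactly as in Theorem~\ref{T:add}.

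The difference with the additive case is that here $\eta_0$ itself is not a pull-back from $\Pu$; rather $\varrho$ has image in $\C^*$, so the relevant multivalued object is $G = \exp(\int\eta_0)$, a multivalued function on $U$ whose monodromy is $\rho = \varrho$. I would choose the pair $(\omega_0,\eta_0)$ representing $(\F,\nabla)$ so that $\eta_0 - h^*\beta_0$ is a \emph{logarithmic} $1$-form (absorbing the non-logarithmic part of $\nabla$, which by construction is $dg = h^*\beta_0$, into the $h^*\beta_0$ term), with $\beta_0$ having a non-logarithmic pole at some point $p\in\Pu$. Restricting to a general fiber $U_t = U\cap h^{-1}(t)$, the equation $d\omega_0 = \omega_0\wedge\eta_0$ shows that $\omega_0/G$ is closed on $U_t$, i.e. the multivalued form $G^{-1}\omega_0$ restricts to a closed $1$-form on each fiber with a fixed multiplicative monodromy character $\rho_t$ on $\pi_1(U_t)$. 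To factor $\varrho$ through $h$ one must show that this twisted cohomology class vanishes on the general fiber, equivalently that the ``twisted periods''
\[
F(t) = \int_{\delta_t} G^{-1}\omega_{0,t}
\]
(for $\delta_t$ running over a suitable twisted homology of $U_t$) are identically zero for general $t$. The key computation is again a Picard--Fuchs type identity: differentiating $F$ along a path $\gamma_t$ in $T^* = h(U^*)$, using Stokes and $d(G^{-1}\omega_0) = G^{-1}\omega_0\wedge(\eta_0 - \tfrac{dG}{G} ) $ — but $dG/G = \eta_0$ after the right normalization — one should arrive at $dF/F = \beta_0$, so that $F$ has a genuine pole-type (non-moderate) singularity at $p$ unless $F\equiv 0$. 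The growth contradiction then comes from the moderate-growth theorem for periods of families of rational forms, \cite[Th\'eor\`eme 1.8]{MR0417174}; here one applies it to $\omega_0$ together with the logarithmic (regular-singular) twist, using that $G$ has moderate growth along $h^{-1}(p)$ precisely because the non-logarithmic part of $\nabla$ was separated out into $\beta_0$ and the residues of the remaining logarithmic part are finite.

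**The main obstacle** I expect is making the twisted Picard--Fuchs argument rigorous: unlike the additive case, where $\omega_{0,t}$ is an honest family of rational $1$-forms and one cites the classical Picard--Fuchs / moderate-growth package directly, here one works with sections of a rank-one local system $\C_{\rho_t}$ (twisted de Rham cohomology), and one must either reduce to the untwisted situation by a base change that trivializes $\rho$ over the fibers, or invoke a twisted version of the moderate-growth estimate. A clean way around this is: after a generically finite Galois base change one may assume all residues of $\nabla$ are integers, and then $G$ itself becomes (a root of) a rational function times $\exp$ of a rational function pulled back from $\Pu$; the genuinely multivalued, fiber-varying part of $G$ disappears on the general fiber, so $G^{-1}\omega_{0,t}$ is, up to a fixed rational factor, an honest family of rational $1$-forms and the argument of Theorem~\ref{T:add} applies verbatim. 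So the real work is an initial normalization lemma — choosing the trivialization and the base change so that the twist is concentrated transversally to $h$ — after which the proof runs parallel to the additive case, concluding with Propositions~\ref{P:pullRiccati} and \ref{P:pushRiccati}.
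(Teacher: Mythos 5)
Your proposal founders on the point that is in fact the crux of the multiplicative case: the residues of $\nabla$ need not be rational. The local monodromy around a component of $D$ is $\exp(-2\pi i\,\Res)$, and a representation into $(\mathbb C^*,\cdot)$ imposes no arithmetic constraint on it, so two of your key steps fail. First, in the logarithmic case, $\exp(\int\eta_0)$ is not multivalued algebraic when some residue is irrational (think of $x^\lambda$ with $\lambda\notin\mathbb Q$), so its branches do not define a generically finite Galois cover. The paper handles this case quite differently: the multiplicative monodromy fixes $0$ and $\infty$, giving two $\mathcal H$-invariant sections of the associated $\mathbb P^1$-bundle over $X-D$, which extend meromorphically to $X$ because $\nabla$ is logarithmic; in a trivialization putting them at $0$ and $\infty$ the foliation $\mathcal H$ is defined by the closed form $\frac{dz}{z}-\eta$, whose pull-back by $\sigma$ is a closed rational $1$-form defining $\mathcal F$ itself, with no covering needed. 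Second, and more seriously, your reduction ``after a generically finite Galois base change one may assume all residues of $\nabla$ are integers'' is false for the same reason: a finite cover multiplies a residue along a ramified divisor by the ramification index and cannot turn an irrational residue into an integer. Since this normalization is what lets you fall back on the untwisted Picard--Fuchs argument of Theorem~\ref{T:add}, the whole second half of your argument rests on an impossible reduction.

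The paper's proof instead splits according to whether every component of the polar divisor of $\nabla$ that dominates $\mathbb P^1$ under $h$ has integral residue. When yes, a Hodge-theoretic argument (the identity $W_{\mathbb C}\cap H^2(X,\mathbb Q)=W$ together with the Hodge index theorem) shows that the non-rational part of the residue divisor is a $\mathbb C$-linear combination of fibers of $h$ and can therefore be absorbed, via Proposition~\ref{P:log}, into the pull-back of a logarithmic form on $\mathbb P^1$; only the remaining rational-residue part is trivialized by a finite cover, after which the Picard--Fuchs argument of Theorem~\ref{T:add} and Proposition~\ref{P:pushRiccati} apply. When no --- some component transverse to $h$ has non-integral residue --- the conclusion is not a Riccati pull-back at all: one reduces to a surface by \cite[Lemma 9]{MR1185124}, observes that the intersection of such a component with a non-logarithmic pole of $\nabla$ is a saddle-node which by \cite[Proposition 5.5]{MR1726913} must be analytically conjugate to its formal normal form (otherwise all residues would be integral), deduces from the explicit normal form that the invariant sections $\Sigma_1,\Sigma_2$ extend meromorphically across these components, and then extends them to all of $X$ by the Andreotti/Hartshorne extension theorems to conclude that $\mathcal F$ is defined by a closed rational $1$-form. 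This entire branch, which carries the real work of the theorem, is absent from your proposal.
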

\begin{proof}
Let $(P,\mathcal H, \sigma)$ be the transverse projective structure naturally associated to  $(\mathcal F,\nabla)$.
Since the monodromy is multiplicative, on the complement of the singular divisor of $\nabla$ we have two sections $\Sigma_1, \Sigma_2$ of $P$ invariant by $\mathcal H$. If $\nabla$ is logarithmic then these sections are indeed meromorphic over all $X$. If we choose a trivialization of $P$ where these sections are at zero and at infinity then $\mathcal H$ is defined by $\Omega= \frac{dz}{z} -  \eta$ where $\eta$ is the pull-back of a rational $1$-form on the basis. Integrability
of $\Omega$ implies that $\eta$ is closed and therefore $\Omega$ is also closed. The pull-back of $\Omega$  under the section $\sigma$ gives a closed rational $1$-form on $X$ defining $\mathcal F$.

Suppose that $\nabla$ is not logarithmic, and as in the proof of Theorem \ref{T:add} consider the decomposition
of $\nabla$ in a logarithmic connection $\nabla_{\log}$ and a closed rational $1$-form $h^* \beta_0$ without residues,
where $h: X \to \mathbb P^1$ is a morphism with irreducible general fiber.
Such a decomposition exists since $h^1(X,\mathbb C)=0$.

If every irreducible component $H$ of the singular divisor of $\nabla$ which dominates $\mathbb P^1$ (i.e. $h(H)=\mathbb P^1$) has integral
residues then  we claim that the representation factors through $h$. To verify this,  consider the residue divisor
of $\nabla$. It can be written in the form $\Res(\nabla) = V + T$ where $V$ is a $\mathbb C$-divisor supported on fibers
of $h$ and $T$ is a $\mathbb Z$-divisor with irreducible components transverse to $h$, i.e. not contained in fibers of $h$.
Consider the $\mathbb Q$-vector subspace $W$ of $H^2(X, \mathbb C)$ generated by the Chern classes of irreducible components of the support of $V$.
Notice that $W_{\mathbb C}$, the $\mathbb C$-vector subspace of $H^2(X, \mathbb C)$  generated by $W$, satisfies the identity
\[
W_{\mathbb C} \cap H^2(X,\mathbb Q) = W
\]
Since both $c_1(N\mathcal F)= c_1(V) + c_1(T)$ and $c_1(T)$  belong to $H^2(X,\mathbb Q)$, the Chern class of $V$ also belongs to $H^2(X,\mathbb Q)$ and consequently
belongs to $W$. Therefore there is a $\mathbb Q$-divisor $V'$ with support contained in the support of $V$ having the same Chern class as $V$.
Thus we can write $V = V' + V''$ where $V''$ is $\mathbb C$-divisor with zero Chern class supported
on fibers of $h$, and $V'$ is a $\mathbb Q$-divisor.
 Hodge index Theorem implies that $V''$ is a $\mathbb C$-linear combination of fibers of $h$. Using Proposition \ref{P:log}, we deduce that the
logarithmic connection $\nabla_{log}$ can be written as a sum of a logarithmic connection
$\nabla_{\log,\mathbb Q}$ with rational residues and $\eta_{log}$ the pull-back under $h$ of a logarithmic $1$-form on $\mathbb P^1$.
The monodromy representation of the connection $\nabla_{\log,\mathbb Q}$  is of finite order  around its poles, and since $h^1(X,\mathbb C)=0$, this suffices
to conclude that it is finite.
After passing to a finite ramified covering, we can assume that the monodromy of $\nabla_{\log,\mathbb Q}$  is trivial. We can then
argue as in the proof of Theorem~$\ref{T:add}$, using Picard-Fuchs equation  and
Proposition \ref{P:pushRiccati}, to conclude that $\mathcal F$ is the pull-back of a transversely affine Riccati foliation on a surface.

If there exists an irreducible component $H$  of the singular divisor of $\nabla$ with non integral residue generically transverse to the fibers of $h$,
then we are going to prove that $\mathcal F$ is defined by a closed rational $1$-form. The proof of  \cite[Lemma 9]{MR1185124}
shows that a codimension one foliation $\F$ on a projective manifold is defined by a rational closed $1$-form
if and only if the same holds true for the restriction of $\F$ to a sufficiently general hyperplane section. Therefore it suffices to consider the case where
$X$ is a surface.  We can further assume that $\mathcal F$ is a reduced foliation in the sense of Seidenberg.

Let us consider a general point $q$ at the intersection of $H$ (which is now a curve)  with a non-logarithmic pole of $\nabla$.
This point of intersection is a saddle node singularity for the foliation $\mathcal F$ which has formal normal form \cite[Section 1.1]{MR1726913}
\[
  \theta_{k,\mu} = x^{k+1} dy - y(1 + \mu x^k)dx .
\]
If it is not analytically conjugated to its formal normal form then \cite[Proposition 5.5]{MR1726913} implies that
every transverse affine structure for $\mathcal F$ has integral residues. Therefore, our assumptions implies that the saddle node
is analytically conjugated to its formal normal form. Hence  every  transverse affine structure for the germ of $\mathcal F$ at $q$
is defined by a pair  $({\theta_{k,\mu}}/{yx^{k+1}}, \lambda {\theta_{k,\mu}}/{yx^{k+1}})$, with $\lambda\in \C$. It follows that the sections  $\Sigma_1, \Sigma_2$ defined on the first
paragraph of the proof extend to meromorphic sections over a neighborhood of any component of the singular divisor which is generically transverse to the fibers of $h$. Adding to  $H$
some reduced and irreducible fibers of $h$ not contained in the singular divisor of $\nabla$ we obtain a reduced divisor with ample normal bundle, and
a neighborhood of it where the  sections $\Sigma_1$ and $\Sigma_2$
extend as meromorphic sections. We can apply \cite[Th\'eor\`eme 5]{MR0152674} or \cite[Corollary 6.8]{MR0232780} to extend these sections to the whole of $X$. We conclude that $\mathcal F$
in this case is defined by a closed rational $1$-form.
\end{proof}

\section{Proof of Corollary \ref{COR:B}}
Without loss of generality we can assume that $\omega$ has zero set of codimension at least two.
If $\omega$ is a polynomial Liouvillian integrable $1$-form on $\mathbb C^n$ without  invariant algebraic hypersurfaces
then $d \omega = dQ \wedge \omega$ for some polynomial $Q \in \mathbb C[x_1, \ldots, x_n]$. Let   $\mathcal F$ be the extension to $\mathbb P^n=\C^n\cup H_{\infty}$ of the codimension one foliation of $\mathbb C^n$ defined by $\omega$.

According to Theorem \ref{THM:A} there exists a rational map $F:\mathbb P^n \dashrightarrow S$, where $S$ is a $\mathbb P^1$-bundle over $\mathbb P^1$, and a transversely affine Riccati foliation $\mathcal R$ on $S$ such that $\mathcal F = F^*\mathcal R$. We will denote by $\pi: S \to \mathbb P^1$ the reference fibration of the Riccati foliation $\mathcal R$, i.e. $\mathcal R$ is everywhere transverse to the general fiber of $\pi$.

 By Stein factorization, we can that suppose the composition $\pi \circ F$ has irreducible general fiber.
 With a suitable choice of coordinates we can identify the restriction to $\mathbb C^n$ of $\pi \circ F$ with  a polynomial $P$ such that $P-c$ is irreducible for a general $c \in \mathbb C$ and $Q= A \circ P$ for some polynomial $A$ in one variable.

Up to birational transformations on $S$, we can also assume that
$S$ is a compactification of $\mathbb C^2$ such that the restriction of $\mathcal R$ to $\mathbb C^2$ has no invariant
algebraic curves and such that the divisor at infinity is $\mathcal R$ invariant and has no dicritical singularities.

The pre-image under $F$ of the divisor at infinity must be therefore invariant by $\mathcal F$. Since $\mathcal F$ has no algebraic invariant hypersurfaces on $\mathbb C^n$, it follows that this pre-image must be contained in the hyperplane at infinity. Hence $F$, in suitable coordinates, is nothing but a polynomial map $F:\mathbb C^n \to \mathbb C^2$. \qed

\section{Proof of Theorem \ref{THM:C}}
The statement of Theorem \ref{THM:C} is about the birational equivalence class of $\nabla$. Therefore we can assume that $\nabla$ is a meromorphic flat connection
on the trivial rank two vector bundle. Since it takes values in $\mathfrak{sl}(2)$, the connection matrix $\Omega$ has zero trace; and the reducibility hypothesis allows us to assume
that $\Omega$ is an upper triangular  matrix.
Therefore we can write
\[
\Omega = \left [ \begin{matrix} \eta/2&\omega\\0&-\eta/2\end{matrix} \right]
\]
for suitable rational $1$-forms $\omega$ and $\eta$. Since $\nabla$ is flat, the integrability equation $d\Omega+\Omega  \wedge \Omega=0$ holds true. This equation is equivalent to the
pair of equations $d\omega = \omega \wedge \eta$ and $d\eta=0$.

If $\omega$ is zero then there is nothing else to prove. Otherwise $\omega$ defines a transversely affine foliation $\F$ on $X$, with transversely affine structure given by the pair $(\omega,\eta)$.

Assume first that the foliation $\F$ has a non constant rational first integral $f \in \C(X)$.  Then $g\omega=df$ for some rational functions $f,g:X \dashrightarrow \mathbb P^1$.
We can replace $f$ by its Stein factorization, which still takes values in $\mathbb P^1$ since $h^1(X,\C)=0$.
If we apply the birational gauge transformation $\sqrt{g}\oplus1/\sqrt{g}$ on $Y$, the resolution of a double covering of $X$ determined by $\sqrt{g}$,
the connection matrix becomes
\[
\left [ \begin{matrix} \eta/2 - 1/2 d \log g & df\\0& -\eta/2 + 1/2 d \log g \end{matrix} \right]  =  \left [ \begin{matrix} hdf/2&df\\0&-hdf/2\end{matrix} \right] ,
 \]
 with $h \in \C(X)\subset \C(Y)$ satisfying $dh\wedge df=0$.  Since the general fiber of $f$ is irreducible, there exists $H \in \C(\mathbb P^1)$ such that $h = H\circ f$, and it becomes clear that the induced connection on $Y$ is birationally equivalent to the pull-back of a meromorphic connection on $\mathbb P^1$.

From now on we will assume that $\mathcal F$ does not admit a non constant rational  first integral. In particular, if $\F$ is defined by a closed rational $1$-form $\tilde{\omega}$ then
every other closed rational $1$-form defining $\F$ is a constant multiple of $\tilde{\omega}$.

If  there exists a generically finite Galois morphism $p:Y\to X$ such that $p^* \mathcal F$ is  defined by a closed rational $1$-form (case (1) of Theorem \ref{THM:A}) then after applying a
 gauge transformation of the form $\sqrt{g}\oplus1/\sqrt{g}$, with $g\in \C(Y)$, we can assume that $\omega$ is closed, and consequently $\eta=\lambda \omega$, for some  $\lambda \in \C$.
 If $\lambda=0$ there is nothing else to prove, otherwise  we apply  the gauge transformation with matrix
 \[
 \left [ \begin{matrix} 1&1/\lambda\\0&1\end{matrix} \right]
 \]
to  obtain a diagonal connection matrix and we have the result.

Now, we suppose we are not in case $(1)$ of Theorem \ref{THM:A}, in particular  $\F$   is a rational pull-back of a Riccati foliation $\mathcal H_0$ on a surface $S$ (case (2) of Theorem \ref{THM:A}).
Then there exists $f,g,h \in \mathbb C(X)$ and $\alpha, \beta$ rational $1$-forms on $\mathbb P^1$ such that
\[
   g\omega = dh + f^* \alpha + (f^* \beta)h  \, .
\]
After applying the  gauge transformation $\sqrt{g}\oplus1/\sqrt{g}$ we can assume that $g = 1$, thus by proposition $\ref{uniqueness}$, $\eta = f^* \beta$.
If we apply the gauge transformation given by the matrix
\[
 \left [ \begin{matrix} 1& h \\0&1\end{matrix} \right]
\]
we obtain the connection form
\[
 \left [ \begin{matrix} f^* \beta/2 &    f^* \alpha \\0&- f^* \beta / 2 \end{matrix} \right]
\]
which is clearly a pull-back from a curve. \qed

\bibliographystyle{amsalpha}

\end{document}